\theoremstyle{definition}
\newtheorem{assumption}[theorem]{Assumption}
\crefname{assumption}{Assumption}{Assumptions}
\numberwithin{algorithm}{section}
\pgfplotsset{compat=newest}
\pgfplotsset{
    fb/.style                 = { color = Set2-A,          line width = 1pt },
    projgrad/.style           = { color = Set2-B,          line width = 1pt, dashed },
    legend style = {
        inner sep = 0pt,
        outer xsep = 5pt,
        outer ysep = 0pt,
        legend cell align = left,
        align = left,
        draw = none,
        fill = none,
    },
}
\newcommand{\cj}[1]{\{ #1\}}
\newcommand{\proxold}[2]{\prox_{#1}(#2)}
\DeclareMathOperator{\prox}{prox}
\newcommand{\df}[2]{#1 ( #2 )}
\newcommand{\nr}[2]{ \| #1 \|_{#2}}
\newcommand{\pd}[2]{ \langle #1,#2 \rangle}
\newcommand{\rea}[1]{\mathbb{R}^{#1}}
\def\grad{\nabla}
\def\norm#1{\|#1\|}
\def\linear{\mathbb{L}}
\def\term#1{\emph{#1}}
\def\iprod#1#2{\langle #1, #2 \rangle}
\def\defeq{:=}
\def\N{\mathbb{N}}
\def\R{\mathbb{R}}
\def\C{\mathbb{C}}
\def\extR{\widebar{\R}}
\DeclareMathOperator{\sublev}{sub}
\DeclareMathOperator{\diam}{diam}
\DeclareMathOperator{\interior}{int}
\DeclareMathOperator{\boundary}{bd}
\DeclareMathOperator{\ccone}{ccone}
\DeclareMathOperator{\Sym}{Sym}
\let\Re\relax\DeclareMathOperator{\Re}{Re}
\newcommand{\freevar}{\,\boldsymbol\cdot\,}
\def\abs#1{|#1|}
\def\polar#1{#1^\circ}
\def\bipolar#1{#1^{\circ\circ}}
\def\ortho{o}
\def\relerr{\rho}
\title{Multigrid methods for total variation\texorpdfstring{$^\ddagger$}{}}
\shorttitle{Multigrid methods for total variation}
\date{2024-11-18}
\author{%
    Felipe Guerra\thanks{Research Center in Mathematical Modeling and Optimization (MODEMAT), Quito, Ecuador \emph{and} Department of Mathematics, Escuela Politécnica Nacional (EPN), Quito, Ecuador. \email{edison.guerra@epn.edu.ec}}
    \and
    Tuomo Valkonen\thanks{MODEMAT \emph{and} EPN \emph{and} Department of Mathematics and Statistics, University of Helsinki, Finland. \email{tuomo.valkonen@iki.fi}, \orcid{0000-0001-6683-3572}}
}
\begin{document}

\maketitle

\begin{abstract}
    Based on a \emph{nonsmooth coherence condition}, we construct and prove the convergence of a forward-backward splitting method that alternates between steps on a fine and a coarse grid.
    Our focus is an total variation regularised inverse imaging problems, specifically, their dual problems, for which we develop in detail the relevant coarse-grid problems.
    We demonstrate the performance of our method on total variation denoising and magnetic resonance imaging.
\end{abstract}

\section{Introduction}

In this work, we consider composite optimisation problems of the form
\begin{equation}
    \label{eq:intro:problem:fine-grid}
    \min_{x \in X}~ F(x) + G(x),
\end{equation}
where $F$ is convex and smooth, and $G$ is convex but possibly nonsmooth on a Hilbert space $X$.
We want to apply forward-backward splitting \cite{lions1979splitting}
to this problem, while reducing computational effort by occasionally passing to a lower-dimensional problem.
Such multigrid methods make possible the computationally efficient high-precision solution of partial differential equations \cite{briggs2000multigrid}.
A few works \cite{nash2000multigrid,parpas2017multilevel,kornhuber1994monotone,ang2024mgprox} have looked into applying the same principle to large-scale optimisation problems and variational inequalities.
However, none, so far, treat nonsmoothness completely satisfactorily: while \cite{ang2024mgprox} allows $G$ to be nonsmooth, it requires $F$ to be strongly convex in addition to smooth.
Such an assumption is rarely satisfied in problems of practical interest.
In \cite{kornhuber1994monotone} only constrained quadratic problems are considered.
In \cite{parpas2017multilevel}, less assumptions are imposed on the fine-grid problem, however, the coarse-grid problems are required to be smooth.
We want to exploit the inherent nonsmooth properties of the problem on the coarse grid as well.

We focus on imaging with isotropic total variation regularisation, i.e.,
\[
    \min_{y \in Y}~ E(y) + \alpha\norm{\grad_h y}_{2,1},
\]
where $E$ is a data fitting term; $\grad_h \in \linear(Y; X)$ a (discrete) gradient operator; and $\norm{\freevar}_{2,1}$ the sum over a pixelwise $2$-norms.
Since, in the present work, we are limited to forward-backward splitting, and the proximal map of total variation is not prox-simple, i.e., not easily calculated, we have to work with the dual problem
\begin{equation}
    \label{eq:intro:tv-dual}
    \min_{x \in X}~ E^*(-\grad_h^* x) + \delta_{\alpha B_{2,1}}(x).
\end{equation}
To calculate $\grad E^*$ efficiently, indeed, for $E^*$ to be smooth, we will, unfortunately, need $E$ to be strongly convex (but not necessarily smooth).
Typically $E(y)=\frac{1}{2}\norm{Ty-b}^2$ for a forward operator $A$ mapping an image to its measurements, so we will need $T$ to be invertible.
This holds with fully or over-sampled data.

The problem \eqref{eq:intro:tv-dual} is of the form \eqref{eq:intro:problem:fine-grid}.
A significant step in forming a multigrid optimisation method is deciding on a coarse-grid version of the problem. While MGProx \cite{ang2024mgprox} allows any smooth coarse-grid problem, we will follow the approach of \cite{parpas2017multilevel}---where gradient descent for smooth problems was considered---in proposing in \cref{sec:nscc} a \term{nonsmooth coherence condition} that locally determines the coarse-grid problem. For \cref{eq:intro:tv-dual}, this will form coarse-grid constraints more difficult than $\alpha B_{2,1}$.
We analyse the projection to those constraints in \cref{sec:tv} after proposing and proving the convergence of the general method in \cref{sec:fb}.
We finish with denoising and magnetic resonance imaging (MRI) experiments in \cref{sec:numerical}.

This work builds upon the Master's thesis of the first author \cite{felipe-msc} with simplified proofs and coarse problem, and expanded numerics with a faster implementation \cite{multigrid-codes-zenodo}.

\paragraph{Notation}

Let $X$ and $Y$ be Hilberts spaces, $A \subset X$.
We write $\linear(X; Y)$ for the bounded linear operators between $X$ and $Y$, $\delta_A$ for the $\{0,\infty\}$-valued indicator function of $A$, and
$B(x,\alpha)$ for the closed ball of radius $\alpha$ and centre $x$ in $X$.
We write $\polar A \defeq \{ z \mid \iprod{z}{x} \le 0\ \forall x \in A\}$ for the polar, and $\bipolar A \defeq \polar{(\polar A)}$ for the bipolar.
These satisfy $\bipolar A \supset A$ and $\polar{(\bipolar A)} = \polar A$.
The smallest closed convex cone containing $A$ is $\ccone A \defeq \bipolar A$.
The normal cone to $A$ at $x$ is $N_A(x) \defeq \{ z \mid \iprod{z}{\tilde x - x} \le 0 \ \forall \tilde x \in A\}$.
For a convex $F: X \to \extR$, the subdifferential at $x$ is $\partial F(x)$, the proximal operator $\prox_F$, and the Fenchel conjugate $F^*$.
We have $\partial \delta_A(x) = N_A(x)$.
We refer to \cite{clason2020introduction} for more details on these concepts.
Finally $x_{\freevar j} \in \R^D$ is the $j$:th row of $x \in \R^{D \times n}$.

\section{The coarse problem}
\label{sec:nscc}

The first question we have to answer is how to build the coarse problem?
In this section, we construct a general guideline, the \emph{nonsmooth coherence condition}, and prove that a forward-backward method applied to a coarse-grid problem satisfying this condition, will construct a descent direction for the fine grid.

\subsection{The nonsmooth coherence condition}

Write $I_h^H \in \linear(X; X_H)$ for the \term{restriction} operator from the fine grid modelled by the Hilbert space $X$ to the coarse grid modelled by the Hilbert space $X_H$. Typically $X$ and $X_H$ are finite-dimensional with $\dim X_H \ll \dim X$.
We call $I_H^h \in \linear(X_H; X)$ satisfying $\mu I_H^h=(I_h^H)^*$ for some $\mu>0$ the \term{prolongation} operator.

In \cite{nash2000multigrid,parpas2017multilevel}, treating the smooth problem $\min_x F(x)$, the coarse-grid problems $\min_x F_H^k$ were built by introducing an arbitrary coarse objective $F_H$ that satisfies the smooth \term{coherence condition}
$
    I_h^H \nabla F(x^{k}) = \nabla F_H(\zeta ^{k,0}),
$
for an initial coarse point $\zeta^{k,0}$, typically $\zeta^{k,0}=I_h^H x^k \in X_H$, and setting
\[
    F_H^k(\zeta) \defeq F_H(\zeta) + \iprod{w_H^k}{\zeta-\zeta^{k,0}}
    \quad\text{for}\quad
    w_H^k \defeq I_h^H \grad F(x^k) - \grad F_H(\zeta^{k,0}) \in X_H.
\]
Then $\grad F_H^k(\zeta^{k,0})=\grad F(x^k)$, so if $x^k$ solves the fine-grid problem, the coarse grid problem triggers no change: it is solved by $\zeta^{k,0}$.
A descent direction for the coarse-grid problem also allows constructing a find-grid descent direction \cite{nash2000multigrid}.

We extend this approach to the nonsmooth problems \eqref{eq:intro:problem:fine-grid}.
Specifically, for $G_H^k$ satisfying the following two assumptions, we take as our \term{coarse-grid problem}
\begin{align}
    \label{eq:notation:problem:coarse-grid}
    \min_{\zeta \in X_{H}} F_H^k(\zeta) + G_H^k(\zeta).
\end{align}

\begin{assumption}[Basic coarse structure]
    \label{ass:coarse:basic}
    $F_H: X_H \to \R$ is convex with $L_H$-Lipschitz gradient. For all $k \in \N$, $G_H^k: X \to \extR$ is convex, proper, lower semicontinuous.
    The step length parameter $\tau_H>0$ satisfies $\epsilon \defeq 2- \tau_HL_H > 0$.
\end{assumption}

\begin{assumption}[Nonsmooth coherence condition]
    \label{ass:coarse:coherence}
    The fine-grid iterate $x^k$ and the initial coarse iterate $\zeta^{k,0}$
    (typically $I_h^H x^k$) satisfy
    $
        \df{I_{h}^{H}\partial G}{x^{k}}\subseteq \df{\partial G_{H}^{k}}{\zeta^{k,0}}.
    $
\end{assumption}

\begin{example}
    \label{ex:coarse:canonical:gH}
    Take $G_H^k=\delta_{\Omega^k}$ for $\Omega^k \defeq  \zeta^{k,0} + \polar{(I_h^H\partial G(x^k))}$.
    Obviously, $\Omega^k \subset X_H$ is nonempty and convex, and $\partial G_H^k(\zeta^{k,0})=N_{\Omega_k}(\zeta^{k,0})=\bipolar{(I_h^H\partial G(x^k))} \supset I_h^H\partial G(x^k)$.
\end{example}

\subsection{Coarse-grid algorithm and descent directions}

We apply forward-backward splitting to the coarse-grid problems \eqref{eq:notation:problem:coarse-grid}.
For an initial coarse point $\zeta^{k,0}$ and an iteration count $m \in \N$, we  thus iterate
\begin{equation}
    \label{eq:coarse:alg}
    \zeta^{k,j+1}
    \defeq
    \prox_{\tau_H G_H^k}(\zeta^{k,j} - \tau_H \grad F_H^k(\zeta^{k,j}))
    \quad
    (j=0,\ldots,m-1).
\end{equation}
In the following we show that $d=\zeta^{k,m}-\zeta^{k,0}$ is a fine-grid descent direction.

\begin{lemma}
    \label{lemma:coarse:descent}
    If \cref{ass:coarse:basic} holds, and we apply \eqref{eq:coarse:alg} for any $\zeta ^{k,0} \in X_H$, then
    \[
        \pd{I_h^H \nabla F(x^k)}{\zeta^{k,m}-\zeta^{k,0}}+\frac{\epsilon}{2\tau_H}\sum _{j=0}^{m-1}\nr{\zeta^{k,j+1}-\zeta^{k,j}}{X_H}^{2}\leq \df{G_{H}^{k}}{\zeta^{k,0}} - \df{G_{H}^{k}}{\zeta^{k,m}}.
    \]
\end{lemma}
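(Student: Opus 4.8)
The plan is to run the standard forward-backward descent estimate on the coarse objective $F_H^k + G_H^k$, then convert the resulting decrease of $F_H^k$ into the linear term $\pd{I_h^H\grad F(x^k)}{\zeta^{k,m}-\zeta^{k,0}}$ using convexity of $F_H^k$ together with the coherence construction $\grad F_H^k(\zeta^{k,0}) = I_h^H\grad F(x^k)$. Note first that $F_H^k = F_H + \iprod{w_H^k}{\freevar-\zeta^{k,0}}$ differs from $F_H$ by an affine function, so $\grad F_H^k$ is also $L_H$-Lipschitz, and by construction of $w_H^k$ we have $\grad F_H^k(\zeta^{k,0}) = \grad F_H(\zeta^{k,0}) + w_H^k = I_h^H\grad F(x^k)$. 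Only \cref{ass:coarse:basic} is needed; the coherence condition \cref{ass:coarse:coherence} plays no role here.

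The first main step is the per-iteration estimate. The prox step \eqref{eq:coarse:alg} yields the optimality condition $\tau_H^{-1}(\zeta^{k,j} - \tau_H\grad F_H^k(\zeta^{k,j}) - \zeta^{k,j+1}) \in \partial G_H^k(\zeta^{k,j+1})$, so by convexity of $G_H^k$, testing the subgradient inequality at $\zeta^{k,j}$,
\[
    G_H^k(\zeta^{k,j+1}) - G_H^k(\zeta^{k,j}) \le -\tfrac{1}{\tau_H}\nr{\zeta^{k,j+1}-\zeta^{k,j}}{X_H}^2 - \pd{\grad F_H^k(\zeta^{k,j})}{\zeta^{k,j+1}-\zeta^{k,j}}.
\]
Combining this with the descent lemma for the $L_H$-smooth $F_H^k$, namely $\pd{\grad F_H^k(\zeta^{k,j})}{\zeta^{k,j+1}-\zeta^{k,j}} \ge F_H^k(\zeta^{k,j+1}) - F_H^k(\zeta^{k,j}) - \tfrac{L_H}{2}\nr{\zeta^{k,j+1}-\zeta^{k,j}}{X_H}^2$, gives
\[
    (F_H^k + G_H^k)(\zeta^{k,j+1}) - (F_H^k + G_H^k)(\zeta^{k,j}) \le -\tfrac{\epsilon}{2\tau_H}\nr{\zeta^{k,j+1}-\zeta^{k,j}}{X_H}^2,
\]
where $\tfrac{1}{\tau_H} - \tfrac{L_H}{2} = \tfrac{2-\tau_HL_H}{2\tau_H} = \tfrac{\epsilon}{2\tau_H}$ by the definition of $\epsilon$ in \cref{ass:coarse:basic}.

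The second step is to telescope over $j = 0,\dots,m-1$, obtaining $(F_H^k + G_H^k)(\zeta^{k,m}) - (F_H^k + G_H^k)(\zeta^{k,0}) + \tfrac{\epsilon}{2\tau_H}\sum_{j=0}^{m-1}\nr{\zeta^{k,j+1}-\zeta^{k,j}}{X_H}^2 \le 0$, and then to lower-bound $F_H^k(\zeta^{k,m}) - F_H^k(\zeta^{k,0})$ by convexity: $F_H^k(\zeta^{k,m}) - F_H^k(\zeta^{k,0}) \ge \pd{\grad F_H^k(\zeta^{k,0})}{\zeta^{k,m}-\zeta^{k,0}} = \pd{I_h^H\grad F(x^k)}{\zeta^{k,m}-\zeta^{k,0}}$. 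Substituting and rearranging the $G_H^k$ terms to the right-hand side gives exactly the claimed inequality. There is no substantial obstacle; the only points requiring care are keeping the affine term inside $F_H^k$ throughout (rather than splitting off $F_H$ and $w_H^k$ separately) so that the single clean gradient $\grad F_H^k(\zeta^{k,0}) = I_h^H\grad F(x^k)$ appears, and verifying that shifting $F_H$ by an affine map preserves the Lipschitz constant $L_H$ so the descent lemma applies to $F_H^k$.
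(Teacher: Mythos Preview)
Your proof is correct and follows essentially the same route as the paper's: per-step estimate from the prox optimality condition combined with the subgradient inequality for $G_H^k$ and the descent lemma, then a telescope, then convexity at $\zeta^{k,0}$ to extract the linear term $\pd{I_h^H\grad F(x^k)}{\zeta^{k,m}-\zeta^{k,0}}$. The only cosmetic difference is that the paper carries the affine shift $w_H^k$ separately (working with $J_H^k \defeq G_H^k + F_H$ and tracking $\pd{w_H^k}{\zeta^{j+1}-\zeta^j}$ explicitly), whereas you absorb it into $F_H^k$ and apply the descent lemma directly to $F_H^k$; both lead to the same inequality after cancellation.
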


\begin{proof}
    We abbreviate $J_H^k \defeq G_H^k + F_H$ and $\zeta^j \defeq \zeta^{k,j}$.
    In implicit form, \eqref{eq:coarse:alg} reads
    \begin{align}
        \label{eq:coarse:ei}
        0\in \df{\partial G_{H}^k}{\zeta^{j+1}} + \df{\nabla F_{H}}{\zeta ^{k,j}}+w_{H}^{k}+ \tau_H^{-1}\df{}{\zeta^{j+1}-\zeta^{j}}.
    \end{align}
    By the subdifferentiability of $G_H^k$ and the descent inequality
    $F_H(\zeta+h) \leq F_H(\zeta) + \pd{\grad F_H(\zeta)}{h} + \frac{L_H}{2}\nr{h}{X}^2$, valid for any $\zeta,h$ (see, e.g., \cite[Theorem 7.1]{clason2020introduction}),
    \[
        \pd{\df{\partial G_{H}^k}{\zeta^{j+1}}+\df{\nabla F_{H}}{\zeta^{j}}}{\zeta^{j+1}-\zeta^{j}}
        \ge
        \df{J_{H}^k}{\zeta^{j+1}}-\df{J_{H}^k}{\zeta^{j}}
        - \frac{L_H}{2}\nr{\zeta^{j+1}-\zeta^{j}}{X_H}^{2}.
    \]
    Applying \(\pd{\,\cdot\,}{\zeta^{j+1}-\zeta^{j}}\) on both sides of \cref{eq:coarse:ei} and using $\epsilon \defeq 2- \tau_HL_H$ thus yields
    \[
        \pd{w_H^k}{\zeta^{j+1}-\zeta^{j}}
        + \frac{\epsilon}{2\tau_H}\nr{\zeta^{j+1}-\zeta^{j}}{X_H}^{2}
        \le
        \df{J_{H}^k}{\zeta^{j}}-\df{J_{H}^k}{\zeta^{j+1}}.
    \]
    Summing  over $j=0,\ldots,m-1$, it follows
    \[
        \pd{w_{H}^{k}}{\zeta^{m}-\zeta^{0}}+\frac{\epsilon}{2\tau_H}\sum _{j=0}^{m-1}\nr{\zeta^{j+1}-\zeta^{j}}{X_H}^{2}
        \le
        \df{J_{H}^{k}}{\zeta^{0}} - \df{J_{H}^{k}}{\zeta^{m}}.
    \]
    Since $\epsilon>0$, by the construction of $w_{H}^{k}$, we get
    using the convexity of $F_H$,
    \[
        \pd{w_{H}^{k}}{\zeta^{m}-\zeta^{0}} \geq \pd{I_{h}^{H} \nabla F(x^k)}{\zeta^{m}-\zeta^{0}}+\df{F_{H}}{\zeta^{0}}-\df{F_{H}}{\zeta^{m}}.
    \]
    Combining these two estimates and simplifying, we obtain the claim.
\end{proof}

\begin{corollary}
    \label{cor:fb:cord}
    Suppose \cref{ass:coarse:basic,ass:coarse:coherence} hold.
    Let $d \defeq I_H^h(\zeta ^{k,m}-\zeta^{k,0})$.
    Then
    \[
        [G+F]'(x^k;d)
        =
        \sup _{g\in \df{\partial G}{x^{k}}}\pd{g+\df{\nabla F}{x^{k}}}{d}
        \leq
        -\frac{\epsilon}{2\mu\tau_H }\sum _{j=0}^{m-1}\nr{\zeta^{k,j+1}-\zeta^{k,j}}{}^{2} \le 0.
    \]
\end{corollary}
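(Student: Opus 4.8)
The idea is to transfer the coarse-grid estimate of \cref{lemma:coarse:descent} across the prolongation operator to the fine grid, and then use the nonsmooth coherence condition to control the nonsmooth term $G$ by the coarse indicator $G_H^k$. First I would rewrite the directional derivative: since $G+F$ is convex, $[G+F]'(x^k;d) = G'(x^k;d) + \iprod{\grad F(x^k)}{d}$, and $G'(x^k;d) = \sup_{g \in \partial G(x^k)} \iprod{g}{d}$ by the standard duality between the subdifferential and the directional derivative of a convex function, which gives the claimed equality. So it remains to bound $\sup_{g \in \partial G(x^k)} \iprod{g + \grad F(x^k)}{d}$ from above.

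Next I would push the inner product through the prolongation. With $d = I_H^h(\zeta^{k,m}-\zeta^{k,0})$ and the adjoint relation $\mu I_H^h = (I_h^H)^*$, for any $g \in \partial G(x^k)$ we get
\[
    \iprod{g + \grad F(x^k)}{d}
    = \frac{1}{\mu}\iprod{I_h^H g + I_h^H \grad F(x^k)}{\zeta^{k,m}-\zeta^{k,0}}_{X_H}.
\]
By \cref{ass:coarse:coherence}, $I_h^H g \in I_h^H\partial G(x^k) \subseteq \partial G_H^k(\zeta^{k,0})$, so by the subgradient inequality for $G_H^k$ at $\zeta^{k,0}$,
\[
    \iprod{I_h^H g}{\zeta^{k,m}-\zeta^{k,0}}_{X_H} \le G_H^k(\zeta^{k,m}) - G_H^k(\zeta^{k,0}).
\]
Adding the $I_h^H \grad F(x^k)$ term and invoking \cref{lemma:coarse:descent} to bound $\iprod{I_h^H\grad F(x^k)}{\zeta^{k,m}-\zeta^{k,0}}_{X_H} + \frac{\epsilon}{2\tau_H}\sum_j \norm{\zeta^{k,j+1}-\zeta^{k,j}}^2 \le G_H^k(\zeta^{k,0}) - G_H^k(\zeta^{k,m})$, the two $G_H^k$ difference terms cancel, leaving
\[
    \iprod{I_h^H g + I_h^H \grad F(x^k)}{\zeta^{k,m}-\zeta^{k,0}}_{X_H} \le -\frac{\epsilon}{2\tau_H}\sum_{j=0}^{m-1}\norm{\zeta^{k,j+1}-\zeta^{k,j}}^2.
\]
Dividing by $\mu > 0$ and taking the supremum over $g \in \partial G(x^k)$ (the right-hand side does not depend on $g$) yields the middle inequality; the final $\le 0$ is immediate since $\epsilon, \mu, \tau_H > 0$.

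**Main obstacle.** The one genuinely delicate point is the finiteness/well-posedness of the $G_H^k$ terms: the subgradient inequality $\iprod{I_h^H g}{\zeta^{k,m}-\zeta^{k,0}} \le G_H^k(\zeta^{k,m}) - G_H^k(\zeta^{k,0})$ is only useful if $G_H^k(\zeta^{k,0})$ is finite (true since $\partial G_H^k(\zeta^{k,0}) \ne \emptyset$ by \cref{ass:coarse:coherence}, as $\partial G(x^k) \ne \emptyset$ — one should note that if $\partial G(x^k) = \emptyset$ the directional-derivative supremum is $-\infty$ and there is nothing to prove) and if $G_H^k(\zeta^{k,m})$ does not simultaneously appear with the wrong sign elsewhere. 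In fact $G_H^k(\zeta^{k,m})$ could be $+\infty$, but then the right-hand side of \cref{lemma:coarse:descent} is $-\infty$, forcing the inner product with $I_h^H\grad F(x^k)$ to be $-\infty$, so the bound holds trivially; otherwise all quantities are finite and the cancellation is legitimate. I would include a sentence handling this degenerate case, and otherwise the proof is a short chain of the three ingredients above.

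Putting this together:

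\begin{proof}
    If $\partial G(x^k)=\emptyset$, then $\sup_{g\in\partial G(x^k)}\iprod{g+\grad F(x^k)}{d}=-\infty$ and the claim is trivial; so assume $\partial G(x^k)\neq\emptyset$.
    Since $F$ is convex and smooth and $G$ is convex, $[G+F]'(x^k;d)=G'(x^k;d)+\iprod{\grad F(x^k)}{d}$, and by the duality between the directional derivative and the subdifferential of the convex function $G$ we have $G'(x^k;d)=\sup_{g\in\partial G(x^k)}\iprod{g}{d}$. This proves the claimed equality.

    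Let $g\in\partial G(x^k)$ be arbitrary. Using $d=I_H^h(\zeta^{k,m}-\zeta^{k,0})$ and $\mu I_H^h=(I_h^H)^*$,
    \[
        \iprod{g+\grad F(x^k)}{d}
        =\frac{1}{\mu}\pd{I_h^H g+I_h^H\grad F(x^k)}{\zeta^{k,m}-\zeta^{k,0}}.
    \]
    By \cref{ass:coarse:coherence}, $I_h^H g\in I_h^H\partial G(x^k)\subseteq\partial G_H^k(\zeta^{k,0})$; in particular $G_H^k(\zeta^{k,0})$ is finite. If $G_H^k(\zeta^{k,m})=+\infty$, then \cref{lemma:coarse:descent} forces $\pd{I_h^H\grad F(x^k)}{\zeta^{k,m}-\zeta^{k,0}}=-\infty$, and the asserted bound on $\iprod{g+\grad F(x^k)}{d}$ holds trivially. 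Otherwise, the subgradient inequality for $G_H^k$ at $\zeta^{k,0}$ gives
    \[
        \pd{I_h^H g}{\zeta^{k,m}-\zeta^{k,0}}\leq\df{G_H^k}{\zeta^{k,m}}-\df{G_H^k}{\zeta^{k,0}},
    \]
    while \cref{lemma:coarse:descent} gives
    \[
        \pd{I_h^H\grad F(x^k)}{\zeta^{k,m}-\zeta^{k,0}}
        \leq\df{G_H^k}{\zeta^{k,0}}-\df{G_H^k}{\zeta^{k,m}}-\frac{\epsilon}{2\tau_H}\sum_{j=0}^{m-1}\nr{\zeta^{k,j+1}-\zeta^{k,j}}{X_H}^2.
    \]
    Summing these two inequalities, the $G_H^k$ terms cancel, so
    \[
        \pd{I_h^H g+I_h^H\grad F(x^k)}{\zeta^{k,m}-\zeta^{k,0}}
        \leq-\frac{\epsilon}{2\tau_H}\sum_{j=0}^{m-1}\nr{\zeta^{k,j+1}-\zeta^{k,j}}{X_H}^2.
    \]
    Dividing by $\mu>0$ yields
    \[
        \iprod{g+\grad F(x^k)}{d}\leq-\frac{\epsilon}{2\mu\tau_H}\sum_{j=0}^{m-1}\nr{\zeta^{k,j+1}-\zeta^{k,j}}{X_H}^2.
    \]
    The right-hand side is independent of $g$, so taking the supremum over $g\in\partial G(x^k)$ gives the middle inequality of the claim. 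Finally, since $\epsilon,\mu,\tau_H>0$, the right-hand side is $\leq 0$.
\end{proof}
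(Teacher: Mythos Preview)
Your proof is correct and follows essentially the same route as the paper's: use the coherence condition to get the subgradient inequality $\iprod{I_h^H g}{\zeta^{k,m}-\zeta^{k,0}}\le G_H^k(\zeta^{k,m})-G_H^k(\zeta^{k,0})$, add it to the estimate of \cref{lemma:coarse:descent}, and pass back to the fine grid via $(I_h^H)^*=\mu I_H^h$ before taking the supremum over $g$. One remark on your degenerate case: $G_H^k(\zeta^{k,m})=+\infty$ cannot actually occur, since $\zeta^{k,m}$ is produced by $\prox_{\tau_H G_H^k}$ and hence lies in $\Dom G_H^k$; this is fortunate, because a finite inner product cannot be ``forced to equal $-\infty$'' as you write, so that branch of your argument is unnecessary rather than a genuine case split.
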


\begin{proof}
    By \cref{ass:coarse:coherence},
    $
        \pd{I_h^H g}{\zeta -\zeta ^{k,0}} \leq G_H^k(\zeta)-G_H^k(\zeta^{k,0})
    $
    for all $\zeta$ and $g \in \partial G (x^k)$.
    Taking $\zeta = \zeta^{k,m}$, combining with \cref{lemma:coarse:descent}, we obtain
    \[
        \pd{I_h^H g + I_{h}^{H}\nabla F(x^{k})}{\zeta -\zeta ^{k,0}} +\frac{\epsilon}{2\tau_H}\sum _{j=0}^{m-1}\nr{\zeta^{k,j+1}-\zeta^{k,j}}{X_H}^{2}\leq 0
        \quad \forall\,g \in \partial G (x^k).
    \]
    Since $(I_h^H)^*=\mu I_H^h$, taking the supremum over $g$, we obtain the middle inequality of the claim.
    The equality is standard, e.g., \cite[Lemma 4.4]{clason2020introduction}.
\end{proof}

Now, as $d$ is aligned with a negative subdifferential of the fine-grid objective, we readily show that it is a fine-grid descent direction.

\begin{theorem}
    \label{thm:fb:teosd}
    Suppose \cref{ass:coarse:basic,ass:coarse:coherence} hold.
    Then for $d = I_H^h(\zeta ^{k,m}-\zeta^{k,0})$ and any $\kappa \in (0,1)$, there exists $\theta >0$ such that
    \[
        \label{eq:fb:descent}
        \df{[G+F]}{x^{k}+ \theta d}< \df{[G+F]}{x^{k}}+\kappa\theta \df{{[G+F]}'}{x^{k};d}.
    \]
\end{theorem}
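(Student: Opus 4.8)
The statement is the classical existence-of-an-Armijo-step result, so the plan is to reduce it to a one-dimensional convexity argument along the ray $t \mapsto x^k + td$. I would set $\phi(t) \defeq \df{[G+F]}{x^k + t d}$ for $t \ge 0$; being a sum of convex functions precomposed with an affine map, $\phi$ is convex, $\phi(0) = \df{[G+F]}{x^k}$, and directly from the definition of the directional derivative its right derivative at $0$ equals $\delta \defeq \df{{[G+F]}'}{x^k;d}$. By \cref{cor:fb:cord}, $\delta$ is a finite number with $\delta \le -\frac{\epsilon}{2\mu\tau_H}\sum_{j=0}^{m-1}\nr{\zeta^{k,j+1}-\zeta^{k,j}}{}^{2} \le 0$. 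If $\delta = 0$, then, since $\epsilon > 0$, every increment $\zeta^{k,j+1}-\zeta^{k,j}$ vanishes, hence $d = 0$ and the coarse correction, and therefore the line search, is null; so I may assume $\delta < 0$.

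With $\delta < 0$ fixed, I would invoke the standard fact that for a convex $\phi$ the difference quotient $q(t) \defeq \frac{\phi(t)-\phi(0)}{t}$ is nondecreasing on $t > 0$ and satisfies $q(t) \downarrow \phi'(0^+) = \delta$ as $t \downarrow 0$. In particular $q(t)$, and hence $\phi(t) = \phi(0) + t q(t)$, is finite for all sufficiently small $t > 0$, so $x^k + td \in \Dom(G + F)$ there and the displayed inequality is meaningful. Since $\kappa \in (0,1)$ and $\delta < 0$ we have $\kappa\delta > \delta$, so from $q(t) \downarrow \delta$ there exists $\theta > 0$ with $q(\theta) < \kappa\delta$. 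Multiplying by $\theta > 0$ and adding $\phi(0)$ yields $\phi(\theta) < \phi(0) + \kappa\theta\delta$, which is exactly $\df{[G+F]}{x^k + \theta d} < \df{[G+F]}{x^k} + \kappa\theta\,\df{{[G+F]}'}{x^k;d}$.

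The proof is short because the substantive work—that $d$ is genuinely a descent direction, i.e.\ $\delta < 0$—was already done in \cref{lemma:coarse:descent} and \cref{cor:fb:cord}. The only mild points of care are the reduction away from the degenerate case $d = 0$ (immediate from $\epsilon > 0$) and the finiteness of $\phi$ near $0$, needed for the strict inequality between extended-real values to make sense; both follow from the finiteness of the directional derivative guaranteed by \cref{cor:fb:cord}. I do not anticipate any genuine obstacle beyond keeping track of these.
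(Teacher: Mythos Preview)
Your proof is correct and follows exactly the approach the paper sketches in its one-line proof (``In \cref{cor:fb:cord}, we use the definition of the directional derivative''): you simply spell out the standard difference-quotient argument that the paper leaves implicit. Your explicit treatment of the degenerate case $d=0$ and of the finiteness of $\phi$ near $0$ is in fact more careful than the paper itself, which glosses over both points.
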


\begin{proof}
    In \cref{cor:fb:cord}, we use the definition of the directional derivative.
\end{proof}

\section{Forward-backward multigrid}
\label{sec:fb}

We now develop the overall multigrid algorithm for \cref{eq:intro:problem:fine-grid}.
Our starting point, again, is the classical forward-backward splitting
\begin{align*}
    x^{k+1} = \proxold{\tau G}{x^k - \tau \nabla F(x^k)}.
\end{align*}
As this is a monotone descent method, \cref{thm:fb:teosd} suggests that performing  coarse-grid iterations between its iterations would not ruin the convergence.
However, details remain to attend to.
To prove convergence, we adapt the technique of \cite{ang2024mgprox}.

\subsection{Algorithm}
\label{sec:full-algorithm}

Our proposed \cref{alg:fb:mg} is, for simplicity, limited to two grid levels, but easily extended to multiple levels.
It depends on \emph{line search} to guarantee \eqref{eq:fb:descent}, as well as a \emph{trigger condition} to perform coarse corrections. Options for the latter include:
\begin{enumerate}[nosep]
    \item heuristic, after enough progress since the previous correction \cite{parpas2017multilevel};
    \item a fixed number of fine iterations between coarse corrections; or
    \item a bounded number of coarse corrections during the algorithm runtime.
\end{enumerate}
Although a standard line search procedure can be used, in practise, for efficiency, we either take at each coarse correction a fixed $\theta_{k} =\bar\theta_k$, or, if this does not satisfy the sufficient decrease condition  \eqref{eq:fb:descent}, reject the coarse grid result with $\theta_k=0$.

\subsection{Convergence}
\label{sec:convergence}

\begin{algorithm}[t]
    \caption{Forward-backward multigrid (FBMG)}
    \label{alg:fb:mg}
    \begin{algorithmic}[1]
        \Require $F,G,F_H$ and $\tau,\tau_H>0$ satisfying \cref{ass:fb:general,ass:coarse:basic}. Sufficient descent parameter $\kappa \in (0, 1)$ as well as a line search procedure and trigger condition.
        \State Choose an initial iterate $x^0 \in X$. Set $J \defeq G+F$.
        \ForAll{$k=0,1,2,\ldots$ until a chosen stopping criterion is fulfilled}
            \If{a trigger condition is satisfied}
                \State Choose an initial coarse point $\zeta^{k,0}$ (e.g., $I_h^H x^k$).
                \State Design $G_H^k$ satisfying the nonsmooth coherence condition (\cref{ass:coarse:coherence})
                \State Set
                $
                    w_{H}^{k} \defeq I_h^H\nabla F(x^k) - \df{\nabla F_{H}}{\zeta ^{k,0}},
                $
                \ForAll{$j=1,\ldots,m-1$}
                    \State $\zeta^{k,j+1} \defeq \proxold{\tau _H G_H^k}{\zeta^{k,j}-\tau _H [\nabla F_H(\zeta^{k,j}) + w_H^k]}$
                \Comment{Coarse-grid FB}
                \EndFor
                \State Set $d \defeq I_H^h(\zeta ^{k,m}-\zeta ^{k,0})$.
                \State Find $\theta_k \ge 0$ such that {\(\df{J}{x^{k}+\theta_k d} \le \df{J}{x^{k}}+\kappa \theta_k\df{{J}'}{x^{k};d}\)}.
                \label{line:fb:mg:linesearch}
                \Comment{Line search}
                \State
                $x^{k+1} \defeq \proxold{\tau G}{z^k + \nabla F(z^k)}$
                \quad\text{for}\quad $z^k \defeq x^k + \theta_k d$
                \label{line:fb:mg:fine-fb1}
                \Comment{Fine-grid FB}
            \Else
                \State\label{line:fb:mg:fine-fb2}
                \smash{$x^{k+1} \defeq \proxold{\tau G}{x^k + \nabla F(x^k)}$}
                \Comment{Fine-grid FB}
            \EndIf
        \EndFor
    \end{algorithmic}
\end{algorithm}

\begin{assumption}
    \label{ass:fb:general}
    $F,G: X \to \extR$ are proper, convex and lower semicontinuous, $F$ Fréchet differentiable with $L$-Lipschitz gradient.
    The step length $\tau \in (0, 1/L)$.
\end{assumption}

\begin{theorem}[Sublinear convergence]
    \label{thm:fb:teoconvsub}
    Suppose \cref{ass:fb:general} holds, and $x^{*}\in \df{\left[\partial J\right]^{-1}}{0}$ where $J:= G+F$.
    Let $\{x^k\}_{k \ge 1}$ be generated by \cref{alg:fb:mg} for an initial $x^0\in X$ such that the corresponding sublevel set is bounded, i.e.,
    \begin{gather}
        \nonumber
        \varepsilon \defeq \diam(\sublev_{J(x^0)}J) := \sup \{ \nr{x-y}{2} \mid J(x), J(y) \le J(x^0)\} < \infty.
    \shortintertext{Then}
        \label{eq:fb:prop}
        J(x^{k+1})-J(x^*) \leq \frac{1}{k}\max \cj{4C,J(x^0)-J(x^*)}
        \quad\text{for}\quad
        C = \frac{2\varepsilon ^2}{\tau(2-\tau L)}>0.
    \end{gather}
\end{theorem}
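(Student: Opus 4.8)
The plan is to show that \cref{alg:fb:mg} is, despite the coarse corrections, a monotone descent method whose fine-grid forward–backward steps produce enough decrease to drive $J(x^k)-J(x^*)$ to zero at the standard $O(1/k)$ rate. The key observation is that the coarse correction step (line~\ref{line:fb:mg:linesearch}) never increases $J$: by the line search condition and \cref{thm:fb:teosd}, $J(z^k) \le J(x^k) + \kappa \theta_k [J]'(x^k;d) \le J(x^k)$, since $[J]'(x^k;d)\le 0$ by \cref{cor:fb:cord}. Hence $z^k$ still lies in $\sublev_{J(x^0)}J$, and we may treat the fine-grid FB step applied to $z^k$ exactly as an ordinary FB step applied to a point of the bounded sublevel set.

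First I would establish the one-step decrease for a fine-grid FB step. Writing $x^{+} = \prox_{\tau G}(z - \tau\grad F(z))$, the implicit form gives $\tau^{-1}(z-x^{+}) - \grad F(z) \in \partial G(x^{+})$; combining the subgradient inequality for $G$ at $x^{+}$ with the descent inequality $F(x^{+}) \le F(z) + \iprod{\grad F(z)}{x^{+}-z} + \frac{L}{2}\norm{x^{+}-z}^2$ and testing with $x^{+}-z$ and with $x^{*}-x^{+}$ yields the two estimates
\begin{align*}
    J(x^{+}) &\le J(z) - \frac{2-\tau L}{2\tau}\norm{x^{+}-z}^2,\\
    J(x^{+}) - J(x^{*}) &\le \frac{1}{\tau}\iprod{z - x^{+}}{x^{+}-x^{*}} \le \frac{1}{\tau}\norm{x^{+}-z}\,\norm{x^{+}-x^{*}} \le \frac{\varepsilon}{\tau}\norm{x^{+}-z},
\end{align*}
where in the last line I used $x^{+},x^{*}\in\sublev_{J(x^0)}J$ (for $x^*$ this holds as $J(x^*)\le J(x^0)$ by optimality), so $\norm{x^{+}-x^{*}}\le\varepsilon$. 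Since $z=z^k$ (or $z=x^k$ in the non-triggered branch) also satisfies $J(z)\le J(x^0)$, and $J(z)\le J(x^k)$ in either branch, these hold at every iteration with $z=z^k$, $x^{+}=x^{k+1}$. Setting $\delta_{k+1} \defeq J(x^{k+1})-J(x^*)\ge 0$, the first estimate plus $J(z^k)\le J(x^k)$ gives $\delta_{k+1} \le \delta_k - \frac{2-\tau L}{2\tau}\norm{x^{k+1}-z^k}^2$, and the second gives $\norm{x^{k+1}-z^k}^2 \ge \tau^2\delta_{k+1}^2/\varepsilon^2$. Hence
\[
    \delta_{k+1} \le \delta_k - \frac{\tau(2-\tau L)}{2\varepsilon^2}\,\delta_{k+1}^2 = \delta_k - \frac{1}{4C}\,\delta_{k+1}^2 .
\]

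Finally I would run the standard recursion argument on the scalar sequence $\{\delta_k\}$. From $\delta_{k+1}\le\delta_k$ and the displayed inequality, dividing by $\delta_k\delta_{k+1}>0$ (the case $\delta_{k+1}=0$ being trivial) gives $\frac{1}{\delta_{k+1}} \ge \frac{1}{\delta_k} + \frac{1}{4C}\frac{\delta_{k+1}}{\delta_k} \ge \frac{1}{\delta_k} + \frac{1}{4C}$ if $\delta_{k+1}\le 4C$; telescoping from $k=1$ yields $\frac{1}{\delta_{k+1}} \ge \frac{k}{4C}$, i.e. $\delta_{k+1}\le 4C/k$. If instead $\delta_{k+1}>4C$ at some step, then the same one-step bound forces $\delta_{k+1}<\delta_k-\delta_{k+1}$, so $\delta_{k+1}<\delta_k/2$; such steps only accelerate the decrease, and a short bookkeeping combining both cases (or simply noting $\delta_{k+1}\le\delta_k$ always, and $\delta_k\le J(x^0)-J(x^*)$) produces the stated bound $\delta_{k+1}\le\frac{1}{k}\max\{4C, J(x^0)-J(x^*)\}$. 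The main obstacle is purely organisational rather than deep: one must be careful that the line-search/coarse branch does not break monotonicity — which is exactly what \cref{thm:fb:teosd} and the non-positivity of $[J]'(x^k;d)$ guarantee — and that $z^k$ stays in the bounded sublevel set so that the diameter bound $\norm{x^{k+1}-x^*}\le\varepsilon$ remains available; once that is in place the estimate reduces to the classical FB rate computation.
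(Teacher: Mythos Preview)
Your strategy is exactly the paper's: use \cref{thm:fb:teosd} and the line search to ensure $J(z^k)\le J(x^k)$, derive from the fine-grid forward--backward step the recursion $\delta_{k+1}^2 \le C(\delta_k-\delta_{k+1})$, and conclude the $O(1/k)$ rate. Two technical slips need attention, however.

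First, your second one-step estimate $J(x^+)-J(x^*)\le \tau^{-1}\iprod{z-x^+}{x^+-x^*}$ is not correct as written: combining the subgradient inequality for $G$ at $x^+$ with convexity of $F$ at $z$ and the descent inequality leaves a residual $+\tfrac{L}{2}\norm{x^+-z}^2$ that you have dropped. If you keep it and expand $\iprod{z-x^+}{x^+-x^*}$ via the three-point (Pythagoras) identity, you recover precisely the paper's estimate
\[
    J(x^{+})-J(x^*)+\tfrac{1}{2\tau}\norm{x^{+}-x^*}^2+\tfrac{1-\tau L}{2\tau}\norm{x^{+}-z}^2\le \tfrac{1}{2\tau}\norm{z-x^*}^2,
\]
from which $J(x^{+})-J(x^*)\le \tfrac{\varepsilon}{\tau}\norm{x^{+}-z}$ follows by factoring the difference of squares and using $\norm{z-x^*},\norm{x^{+}-x^*}\le\varepsilon$. (Incidentally, $\tfrac{\tau(2-\tau L)}{2\varepsilon^2}=\tfrac{1}{C}$, not $\tfrac{1}{4C}$.)

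Second, your inline recursion argument breaks in case~1: from $\tfrac{1}{\delta_{k+1}}\ge \tfrac{1}{\delta_k}+\tfrac{1}{4C}\tfrac{\delta_{k+1}}{\delta_k}$ you cannot conclude $\tfrac{1}{\delta_{k+1}}\ge \tfrac{1}{\delta_k}+\tfrac{1}{4C}$ under the hypothesis $\delta_{k+1}\le 4C$, since monotonicity gives $\delta_{k+1}/\delta_k\le 1$, not $\ge 1$. The correct dichotomy is $\delta_{k+1}\ge \delta_k/2$ versus $\delta_{k+1}<\delta_k/2$; in the first case $\tfrac{1}{\delta_{k+1}}-\tfrac{1}{\delta_k}\ge \tfrac{1}{2C}$, in the second $\tfrac{1}{\delta_{k+1}}-\tfrac{1}{\delta_k}\ge \tfrac{1}{\delta_k}\ge \tfrac{1}{\delta_0}$, and telescoping yields the claim. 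The paper simply invokes \cite[Lemma~4]{karimi2017imro} for this step.
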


\begin{proof}
    \Cref{thm:fb:teosd} guarantees the line search on \cref{line:fb:mg:linesearch} of \cref{alg:fb:mg} to be satisfiable (strictly for a $\theta_k>0$, although we also allow $\theta_k=0$ and mere non-increase).
    By standard arguments based on convexity, the descent lemma, and the Pythagoras identity (see the proof of \cite[Theorem 11.4]{clason2020introduction}), \cref{line:fb:mg:fine-fb1} satisfies
    \begin{align}
        \label{eq:fb:descent3}
        J(x^{k+1}) -J(x^*) +\frac{1}{2\tau} \nr{x^{k+1}-x^*}{X}^2 + \frac{1-\tau L}{2\tau}\nr{x^{k+1}-z^k}{X}^2\leq \frac{1}{2\tau} \nr{z^k-x^*}{X}^2.
    \end{align}
    Since $\tau L<1$ we have
    \[
        \begin{split}
            J(x^{k+1}) -J(x^*)
            &
            \leq \frac{1}{2\tau} \df{}{\nr{z^k-x^*}{X}^2-\nr{x^{k+1}-x^*}{X}^2}
            \\
            &
            = \frac{1}{2\tau} \df{}{\nr{z^k-x^*}{X}-\nr{x^{k+1}-x^*}{X}}\df{}{\nr{z^k-x^*}{X}+\nr{x^{k+1}-x^*}{X}}
            \\
            &
            \leq \frac{\varepsilon}{\tau} \df{}{\nr{z^k-x^*}{X}-\nr{x^{k+1}-x^*}{X}}
            \\
            &
            \leq \frac{\varepsilon}{\tau} \nr{x^{k+1}-z^k}{X}.
        \end{split}
    \]
    Rearranging gives
    \begin{equation}
        \label{eq:fb:ecu1}
        \df{}{J(x^{k+1})-J(x^*)}^2 \leq (\varepsilon \tau ^{-1})^2 \nr{x^{k+1}-z^k}{X}^2.
    \end{equation}
    On the other hand, taking $x^*=z^k$ in \eqref{eq:fb:descent3}, and continuing with $J(z^k) \le J(x^k)$ established by the line search procedure on \cref{line:fb:mg:linesearch} of \cref{alg:fb:mg}, we obtain
    \[
        \frac{2-\tau L}{2\tau}\nr{x^{k+1}-z^k}{X}^2
        \le
        J(z^k) - J(x^{k+1})
        \le
        J(x^k) - J(x^{k+1}).
    \]
    Combining with \cref{eq:fb:ecu1} yields
    $
        (J(x^{k+1})-J(x^*))^2\leq C[J(x^k) - J(x^{k+1})].
    $
    Repeating the analysis with $z^k=x^k$ establishes the same result for \cref{line:fb:mg:fine-fb2}.
    Now, according to \cite[Lemma 4]{karimi2017imro}, the monotonically decreasing sequence $\cj{J(x^k)}_{k\in \mathbb{N}}$ satisfies \cref{eq:fb:prop}.
\end{proof}

\section{Total variation regularised imaging problems}
\label{sec:tv}

We will in \cref{sec:numerical} apply \cref{alg:fb:mg} to image processing problems of the form
\begin{equation}
    \label{eq:tv:primal-problem}
    \min _{y\in \R^n}~ \phi(y) + \alpha \nr{\nabla_h y}{2,1}
    \quad\text{for}\quad
    \df{\phi}{y} := \frac{1}{2}\sum _{s=1}^t \nr{T_sy-b_s}{2}^2,
\end{equation}
where $\grad_h \in \linear(\R^n; \R^{D \times n})$ for some dimension $D$.
Since the nonsmooth total variation regulariser is not prox-simple, to derive an efficient method, we will need to work with the dual problem.
Since MRI involves complex numbers, we allow $T_s \in \linear(\C^n; \C^n)$ and $b_s \in \C^n$.
Thus the dual formulation is
\begin{equation}
    \label{eq:tv:dual-problem}
    \min _{x\in \R^{D\times n}}\, \phi ^*(-\nabla_h ^* x) + G(x)
    \quad\text{for}\quad
    \df{G}{x} \defeq (\alpha\nr{\cdot}{2,1})^*(x) = \sum_{i=1}^n \delta _{B(0,\alpha)}(x_{\freevar i}).
\end{equation}
We construct $\phi^*$ in \cref{sec:tv:data-fenchel}, after we have first constructed the coarse nonsmooth function $G_H^k$ and its proximal operator in \cref{sec:tv:coarse,sec:tv:coarse-prox}.

\subsection{The coarse problem}
\label{sec:tv:coarse}

We use the standard restriction operator\footnote{For MRI we could replace $I_H^h$ by $\mathcal{F}^*I_H^h\mathcal{F}$, where $\mathcal{F}$ is the Fourier transform in the relevant grid, but do not currently use this form.} $I_h^H = R  \otimes R$, where, in stencil notation,
$R = \begin{bsmallmatrix}
        \frac{1}{2}& 1 & \frac{1}{2}
       \end{bsmallmatrix};$
see \cite{briggs2000multigrid}.
The prolongation operator is then $I_h^H = \frac{1}{4} (I_H^h)^*$.
We take
\begin{equation}
    \label{eq:tv:gHk}
    G_H^k(\zeta) \defeq \sum_{l=1}^N \delta_{\Omega_l}(\zeta_{\freevar l})
    \quad\text{for}\quad
    \Omega_l \defeq \zeta_{\freevar l}^{k,0} + \polar \Gamma_l,
\end{equation}
where for all coarse pixel indices $l=1,\ldots,N$, we define
\begin{equation}
    \label{eq:tv:gammal}
    \Gamma_l
    \defeq
    [I_h^H \partial G(x^k)]_l
    =
    \left\{
        \sum\nolimits_{p\in A_l}q^k_{\freevar p}
        \,\middle|\,
        q_{\freevar p}^k \in [\partial G(x^k)]_p\ \forall p\in A_l
    \right\},
\end{equation}
with $A_l \subset \{1,\ldots,n\}$ the subset of fine pixel indices $i$ that contribute to the coarse pixel $l$ via $I_h^H$, that is, $[I_h^H]_{li} \ne 0$.
Note that $\Omega_l$ is nonempty, closed and convex for all $l=1,\ldots,N$.
Apart from the Lipschitz gradient, there are no theoretical restrictions on $F_H$, but we
take it as a coarse version of $\phi ^*\circ -\nabla _h^*$, as we describe later in \cref{ex:coarse:FH}, after forming $\phi^*$.

\begin{lemma}
    \Cref{ass:coarse:coherence} holds for $G$ and $G_H^k$ as in \cref{eq:tv:gHk,eq:tv:dual-problem}.
\end{lemma}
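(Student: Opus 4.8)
The plan is to verify \cref{ass:coarse:coherence}, which for the problem at hand reads $I_h^H \partial G(x^k) \subseteq \partial G_H^k(\zeta^{k,0})$, by exploiting the product structure of both the constraint set $G$ and the restriction operator $I_h^H = R \otimes R$. The key observation is that $G(x) = \sum_{i=1}^n \delta_{B(0,\alpha)}(x_{\freevar i})$ splits over fine pixels, so $\partial G(x^k)$ is the product $\prod_{i=1}^n N_{B(0,\alpha)}(x^k_{\freevar i})$; and similarly $G_H^k$ splits over coarse pixels. Thus the inclusion decouples into a statement for each coarse pixel $l$: namely $\Gamma_l = [I_h^H \partial G(x^k)]_l \subseteq N_{\Omega_l}(\zeta^{k,0}_{\freevar l})$.

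First I would compute $\partial G_H^k(\zeta^{k,0})$ explicitly. Since $G_H^k(\zeta) = \sum_{l=1}^N \delta_{\Omega_l}(\zeta_{\freevar l})$ with $\Omega_l = \zeta^{k,0}_{\freevar l} + \polar{\Gamma_l}$, the subdifferential at $\zeta^{k,0}$ is the product over $l$ of the normal cones $N_{\Omega_l}(\zeta^{k,0}_{\freevar l})$. Because $\Omega_l$ is a translate of the cone $\polar{\Gamma_l}$ and $\zeta^{k,0}_{\freevar l}$ is the apex of that translated cone (it corresponds to $0 \in \polar{\Gamma_l}$), the normal cone there is the polar of the cone $\polar{\Gamma_l}$, i.e. $N_{\Omega_l}(\zeta^{k,0}_{\freevar l}) = \bipolar{\Gamma_l} = \ccone \Gamma_l$. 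Then the required inclusion $\Gamma_l \subseteq \bipolar{\Gamma_l}$ is just the general fact $A \subseteq \bipolar A$ recalled in the Notation paragraph. This is essentially the content already sketched in \cref{ex:coarse:canonical:gH}; the present lemma is the concrete instantiation after checking that the per-pixel decomposition is legitimate.

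The one genuine point to nail down — and the main (mild) obstacle — is that $[I_h^H \partial G(x^k)]_l$ really equals the set $\Gamma_l$ as written in \cref{eq:tv:gammal}, i.e. that restricting the product set $\partial G(x^k)$ and then taking the $l$-th row coincides with summing the contributing fine rows $q^k_{\freevar p}$, $p \in A_l$, each ranging independently over $[\partial G(x^k)]_p = N_{B(0,\alpha)}(x^k_{\freevar p})$. This follows because $I_h^H = R \otimes R$ acts row-wise: the $l$-th coarse row is a fixed linear combination $\sum_{p} [I_h^H]_{lp}\, x_{\freevar p}$ of fine rows, and since each normal cone $N_{B(0,\alpha)}(x^k_{\freevar p})$ is itself a cone (closed under nonnegative scaling) and the nonzero weights $[I_h^H]_{lp}$ are positive, the scaled set $[I_h^H]_{lp} N_{B(0,\alpha)}(x^k_{\freevar p})$ equals $N_{B(0,\alpha)}(x^k_{\freevar p})$ again; hence the Minkowski sum over $p \in A_l$ is exactly $\Gamma_l$. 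I would also note that the fine pixel index sets $A_l$ need not be disjoint across $l$ (neighbouring coarse pixels share fine pixels through the overlapping stencil $R$), but this causes no difficulty: we only need each coarse row's expression to range over the product set, which it does since each $q^k_{\freevar p}$ is chosen freely in its own factor.

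Assembling these pieces, for every $l$ we get $\Gamma_l = [I_h^H \partial G(x^k)]_l \subseteq \bipolar{\Gamma_l} = N_{\Omega_l}(\zeta^{k,0}_{\freevar l}) = [\partial G_H^k(\zeta^{k,0})]_l$, and taking the product over $l=1,\ldots,N$ yields $I_h^H \partial G(x^k) \subseteq \partial G_H^k(\zeta^{k,0})$, which is precisely \cref{ass:coarse:coherence}. I would keep the write-up short, citing $A \subseteq \bipolar A$ and the standard formula for the subdifferential of a separable indicator and for the normal cone at the apex of a (translated) polar cone, and spending whatever words are needed only on the row-wise/cone-scaling argument identifying $[I_h^H \partial G(x^k)]_l$ with $\Gamma_l$.
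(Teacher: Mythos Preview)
Your proposal is correct and follows essentially the same route as the paper's proof: decompose pixelwise, compute $N_{\Omega_l}(\zeta^{k,0}_{\freevar l}) = N_{\polar{\Gamma_l}}(0) = \bipolar{\Gamma_l}$, invoke $\Gamma_l \subset \bipolar{\Gamma_l}$, and take the product over $l$. Your additional justification that the positive stencil weights can be absorbed into the cones $N_{B(0,\alpha)}(x^k_{\freevar p})$, so that $[I_h^H\partial G(x^k)]_l$ really coincides with the unweighted Minkowski sum in \cref{eq:tv:gammal}, is a detail the paper leaves implicit.
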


\begin{proof}
    We recall for all fine pixel indices $i=1,..,n$ that
    \begin{equation}
        \label{eq:cap4:subdGpsr}
        \left[\partial \df{G}{x^k}\right]_i
        =\df{\partial \delta _{\df{B}{0,\alpha}}}{x^k_{\freevar i}}=\left\{\begin{array}{ll}
        \cj{0}, & x^k_{\freevar i} \in \interior B(0, \alpha),  \\
        \cj{\beta x^k_{\freevar i} \mid \beta\geq 0}, & x^k_{\freevar i} \in \boundary B(0, \alpha), \\
        \emptyset & \text{otherwise}.
        \end{array}\right.
    \end{equation}
    It follows that $\Gamma_l$ is a (possibly non-convex) cone in $\R^D$.
    We then deduce
    \[
        \partial \delta_{\Omega_l}(\zeta_{\freevar l}^{k,0})
        =
        N_{\Omega_l}(\zeta_{\freevar l}^{k,0})
        =
        N_{\Gamma_l^\circ}(0)
        = (\Gamma_l^\circ)^\circ \supset \Gamma_l,
    \]
    and further
    \[
        I_h^H \partial G(x^k)
        = \Gamma_1 \times \cdots \times \Gamma_N
        \subset
        \partial \delta _{\Omega _l}(\zeta _{\freevar 1}^{k,0})
        \times \cdots \times
        \partial \delta _{\Omega _l}(\zeta _{\freevar N}^{k,0})
        = \partial G_H^k(\zeta ^{k,0}).
        \qedhere
    \]
\end{proof}

\subsection{The coarse proximal operator}
\label{sec:tv:coarse-prox}

By \eqref{eq:tv:gHk}, $\Omega_l - \zeta_l^{k,0} = \polar\Gamma_l = \polar{(\bipolar\Gamma_l)}$ is a closed convex cone for each $l$.
When $D=2$, it is therefore defined by at most two “director” vectors.
If we can identify them, it will be possible to write the proximal operator of $G_H^k$ in a simple form.

\begin{lemma}
    \label{lemma:tv:gamma}
    Suppose $D=2$.
    For any coarse pixel index $l \in \{1,\ldots,N\}$, $\bipolar \Gamma_l$ can only take for some fine pixel indices $j,s \in A_l$ one of the forms
    \[
        \cj{0},\quad \ccone\cj{x_{\freevar j}},\quad \ccone\cj{x_{\freevar j},x_{\freevar s}},\quad \ccone\cj{x_{\freevar j},x_{\freevar s},-x_{\freevar j}},\quad \R^2.
    \]
\end{lemma}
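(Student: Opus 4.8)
The plan is to analyze the structure of $\Gamma_l$ as a finite union of rays (and possibly the origin) in $\R^2$, and then compute its bipolar, which is the smallest closed convex cone containing it. First I would recall from \eqref{eq:cap4:subdGpsr} that each $[\partial G(x^k)]_p$ is either $\{0\}$, or a ray $\{\beta x^k_{\freevar p} \mid \beta \ge 0\}$, or empty. Since $\Gamma_l$ is a sum (Minkowski sum over $p \in A_l$) of such sets, the empty case forces $\Gamma_l = \emptyset$, which we may exclude because $\Omega_l$ is nonempty — actually if some summand is empty then $\Gamma_l = \emptyset$ and $\polar\Gamma_l = \R^2$, giving $\bipolar\Gamma_l = \{0\}$ (the first listed form, or rather its polar situation; I should be careful here). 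Otherwise, $\Gamma_l$ is a Minkowski sum of finitely many rays through the origin (some possibly degenerate to $\{0\}$), hence $\Gamma_l = \{\sum_{p} \beta_p x^k_{\freevar p} \mid \beta_p \ge 0\}$, the conical hull of the vectors $\{x^k_{\freevar p} \mid p \in A_l,\ x^k_{\freevar p} \in \boundary B(0,\alpha)\}$. In particular $\Gamma_l$ is already a convex cone, so $\bipolar\Gamma_l = \ccone\Gamma_l = \Gamma_l$ (using $\bipolar A = \ccone A$ and that the conical hull of a finite set is closed).

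Next I would carry out the case analysis on how many distinct rays generate this conical hull and their angular spread in $\R^2$. If there are no boundary indices, $\Gamma_l = \{0\}$. If all the generating vectors point into a half-plane (i.e. the conical hull is a pointed cone or a single ray or a half-line), then the cone is spanned by at most two extreme rays: it is $\ccone\{x_{\freevar j}\}$ (a single ray, when all $x^k_{\freevar p}$ are nonnegative multiples of one vector), or $\ccone\{x_{\freevar j}, x_{\freevar s}\}$ (a proper pointed two-dimensional cone, or a half-line when $x_{\freevar s} = -\lambda x_{\freevar j}$ for $\lambda>0$ — which is captured as a degenerate instance). If the generating vectors do not fit in any closed half-plane, the conical hull is all of $\R^2$. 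The remaining form $\ccone\{x_{\freevar j}, x_{\freevar s}, -x_{\freevar j}\}$ is exactly a closed half-plane: it arises when the vectors fit in a closed half-plane whose boundary line is spanned by $\pm x_{\freevar j}$, with $x_{\freevar s}$ on the interior side — equivalently when two of the rays are antipodal. So the classification reduces to: (0 rays) $\to \{0\}$; (all in one ray) $\to \ccone\{x_{\freevar j}\}$; (pointed 2D cone, no antipodal pair) $\to \ccone\{x_{\freevar j},x_{\freevar s}\}$; (spans a half-plane, antipodal pair present) $\to \ccone\{x_{\freevar j},x_{\freevar s},-x_{\freevar j}\}$; (not in any half-plane) $\to \R^2$.

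The main obstacle I anticipate is bookkeeping rather than depth: making precise the statement that a finitely-generated convex cone in $\R^2$ has at most two extreme rays, and that when it is two-dimensional its two bounding rays can each be chosen to be one of the generators $x^k_{\freevar j}$ (so the named vectors in the lemma are genuinely among the $\{x^k_{\freevar p}\}_{p\in A_l}$, not auxiliary vectors). This is a standard fact about polyhedral cones in the plane — order the nonzero generators by angle; the cone is the angular sector between the two extreme ones, unless the angular spread reaches or exceeds $\pi$, in which case one gets a half-plane or all of $\R^2$. I would also need to double-check the degenerate identifications (e.g. a half-line $\R x_{\freevar j}$ being written as $\ccone\{x_{\freevar j}, x_{\freevar s}\}$ with $x_{\freevar s}$ a negative multiple of $x_{\freevar j}$, and a single ray possibly coinciding with the $\{0\}$ case only if $\alpha$-boundary is never hit), but these collapse into the listed forms. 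Finally I would note that although the lemma is phrased for $\bipolar\Gamma_l$, by the preceding observation $\bipolar\Gamma_l = \Gamma_l$ already, so the five forms describe $\Gamma_l$ itself, which is what feeds into computing $\polar\Gamma_l$ and hence $\prox_{G_H^k}$ in the sequel.
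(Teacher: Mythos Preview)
Your approach is correct and takes a genuinely different route from the paper. You observe up front that $\Gamma_l$ is a Minkowski sum of rays through the origin, hence already a finitely-generated (and therefore closed) convex cone, so $\bipolar\Gamma_l=\Gamma_l$; from there the lemma reduces to the elementary classification of polyhedral cones in $\R^2$ by angular spread. The paper instead gives a constructive, iterative procedure: it processes the boundary vectors $x^k_{\freevar p}$ one at a time, maintaining a set $V_l$ of at most two ``director'' vectors together with an auxiliary set $O_l$ tracking antipodal pairs, and resolves a small linear system at each step to decide which generator to discard; the final case is read off from $|V_l|+|O_l|$. Your argument is cleaner as a proof and, notably, shows that the paper's parenthetical ``(possibly non-convex) cone'' for $\Gamma_l$ is overly cautious---$\Gamma_l$ is always convex. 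The paper's algorithmic proof, on the other hand, doubles as an implementation recipe for identifying the actual indices $j,s\in A_l$ needed in \cref{prop:tv:prox}, which your angular-sort description also provides but less explicitly.

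Two minor clean-ups: your hesitation in the empty case is unnecessary---$\polar\emptyset=\R^2$ and $\bipolar\emptyset=\{0\}$, which is exactly the first listed form; and for the half-plane case you should spell out (as you hint) that both extreme directions on the boundary line must be among the generators, so that $-x_{\freevar j}$ is in fact some $x_{\freevar p}$ with $p\in A_l$, ensuring the named vectors come from $A_l$ as the lemma requires.
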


\begin{proof}
    We construct the director vectors algorithmically, starting with empty sets $V_l$ and $O_l$. The former will eventually contain the directors, while $O_l$ tracks subspaces generated by opposing vectors.
    For all $p \in A_l$, we repeat steps 1 and 2:
    \begin{enumerate}
        \item
        We omit $x^k_{\freevar p} \in  \interior \df{B}{0,\alpha}$ since, by \cref{eq:tv:gammal}, the subgradient $q_{\freevar p}^k=0$, does not contribute to $\Gamma_l$.
        By contrast, if $x^k_{\freevar p} \in \boundary \df{B}{0,\alpha}$, we add this vector to $V_l$.

        \item
        If $V_l=\{z_{j}, z_{s}, z_{p}\}$ for three distinct vectors, one of them must be superfluous for forming $\polar\Gamma_l$.
        To discard it, we form the linear system
        $
            z_{j} \beta_1 + z_{s} \beta_2 = z_{p}
        $
        for the unknowns $\beta_1$ and $\beta_2$, and consider several cases:
        \begin{enumerate}[label=(\alph*),nosep]
            \item  For a non-unique solution, $z_{s} = c z_{j}$ for some $c \ne 0$.
            When $c > 0$, $z_s$ is superfluous.
            When $c < 0$, $\Gamma_l$ must be contained in the subspace orthogonal to $z_s$.
            In both cases, we take $V_l = \{z_{j},z_{p}\}$, and in the latter add $z_s$ to $O_l$.
        \end{enumerate}
        Otherwise the system has a unique solution, and we continue with the cases:
        \begin{enumerate}[resume*]
            \item If $\beta _1 \geq 0$ and $\beta _2 \geq 0$, then $z_{p} \in \ccone\{z_{j}, z_{s}\}$, so we remove $z_{p}$ from $V_l$.
            \item If $\beta _1 >0$ and $\beta_2 <0$, then $z_{j} \in \ccone \{z_{s}, z_{p}\}$, so we remove $z_{j}$ from $V_l$.
            \item If $\beta _1 <0$ and $\beta_2 >0$, then $z_{s} \in \ccone\{z_{j}, z_{p}\}$, so we remove $z_{s}$ from $V_l$.
            \item If $\beta _1 = 0$ and $\beta_2 <0$ or $\beta _1 < 0$ and $\beta_2 = 0$, then $z_{s} = -z_{p}$ or $z_{j} = -z_{p}$, so we eliminate $z_{p}$ from $V_l$ but add it to $O_l$.
            \item If $\beta _1 < 0$ and $\beta_2 <0$, then $\Gamma_l^\circ=\{0\}$, so we terminate with $\bipolar \Gamma_l = \rea{2}$.
        \end{enumerate}
    \end{enumerate}

    If the construction did not terminate in the above loop, we consider:
    \begin{enumerate}[label=(\roman*)]
        \item If $\abs{V_l}+\abs{O_l} \le 2$, then by construction $O_l=\emptyset$.
        If $V_l=\emptyset$, also $\bipolar \Gamma_l = \cj{0}$.
        Otherwise $\bipolar \Gamma_l = \ccone V_l$.
        \item If $|V_l| +|O_l| = 3$, then $O_l=\{z_p\}$ and $V_l=\{z_{j}, z_{s}\}$ with $z_{p} = -z_{j}$ or $z_{p} = -z_{s}$. In other words, the three vectors define the half-space $\bipolar \Gamma_l = \ccone\cj{z_{j},z_{s},-z_{j}}$.
        \item $|V_l| +|O_l| \geq 4$ then $O_l$ defines two distinct subspaces that contain $\polar \Gamma_l$, which must then by $\{0\}$. Hence $\bipolar \Gamma _l = \rea{2}$.
        \qedhere
    \end{enumerate}
\end{proof}

In the next proposition, $z_{j}^\ortho$ denotes a vector orthogonal to $z_{j}$ with $\sup_{z \in \Gamma} \iprod{z}{z_j^\ortho} \le 0$.
We also write
$
    p(\zeta, z) \defeq \max \cj{0,\pd{\zeta}{z}}z/\nr{z}{2}^2
$
for the projection of $\zeta$ to $z[0,\infty)$.

\begin{proposition}
    \label{prop:tv:prox}
    The proximal operator of $G_H^k$ defined in \eqref{eq:tv:gHk} is given by
    \begin{subequations}%
    \begin{equation}
        \label{eq:tv:prox1}
        [\proxold{\gamma G_H^k}{\zeta}]_l = \left \{\begin{array}{ll}
            \zeta _l, & \bipolar \Gamma _l = \cj{0}, \\
            \zeta _l -  p(\zeta_l - \zeta _l^{k,0},z_{j}), & \bipolar \Gamma _l = \ccone\cj{z_{j}}, \\
            \zeta _l^{k,0} + p(\zeta_l - \zeta _l^{k,0},z_j^\ortho), & \bipolar \Gamma _l = \ccone\cj{z_{j},z_{s},-z_{j}}, \\
            \zeta _l^{k,0}, & \bipolar \Gamma_l = \rea{2}, \\
            \text{see below} &\bipolar  \Gamma_l = \ccone\cj{z_{j},z_{s}},
        \end{array}\right.
    \end{equation}
    for each component $l=1,\ldots,m$, where in the final case,
    \begin{equation}
        \label{eq:tv:prox2}
        [\proxold{\gamma G_H^k}{\zeta}]_l =  \left \{\begin{array}{ll}
            \zeta _l^{k,0}, & \zeta _l\in \ccone\cj{z_{j},z_{s}}, \\
            \zeta _l, & \zeta _l\in \ccone\cj{z_{j}^\ortho,z_{s}^\ortho}, \\
            \zeta _l^{k,0} + p(\zeta_l - \zeta _l^{k,0},z_{j}^\ortho), & \zeta _l\in \ccone\cj{z_{j},z_{j}^\ortho}, \\
            \zeta _l^{k,0} + p(\zeta_l - \zeta _l^{k,0},z_{s}^\ortho), & \zeta _l\in \ccone\cj{z_{s},z_{s}^\ortho}.
        \end{array}\right.
    \end{equation}%
    \end{subequations}%
\end{proposition}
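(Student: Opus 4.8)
The plan is to compute the proximal operator componentwise, since $G_H^k$ separates over coarse pixels: $[\prox_{\gamma G_H^k}(\zeta)]_l = \proj_{\Omega_l}(\zeta_l)$, where $\Omega_l = \zeta_l^{k,0} + \bipolar\Gamma_l^\circ$. After the shift $\eta \defeq \zeta_l - \zeta_l^{k,0}$, the task reduces to projecting $\eta$ onto the closed convex cone $K \defeq (\bipolar\Gamma_l)^\circ$ and then adding back $\zeta_l^{k,0}$; by \cref{lemma:tv:gamma} there are only five shapes for $\bipolar\Gamma_l$, hence five shapes for $K$, so I would simply verify the formula case by case. The key facts I would invoke repeatedly are: (i) the polar of $\{0\}$ is $\R^2$ and vice versa, so the first and fourth lines of \eqref{eq:tv:prox1} are immediate ($\proj_{\R^2}\eta=\eta$, $\proj_{\{0\}}\eta=0$); (ii) for a ray $K = z[0,\infty)$ the projection is exactly the map $p(\freevar,z)$ defined before the statement; and (iii) $K = (\ccone\{z_j\})^\circ$ is the closed half-plane $\{w \mid \iprod{w}{z_j}\le 0\}$, whose boundary line is spanned by $z_j^\ortho$.

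For the ray case $\bipolar\Gamma_l=\ccone\{z_j\}$: its polar is the half-plane orthogonal-side of $z_j$, and projecting onto a half-plane means subtracting the positive part of the component along the outward normal $z_j$; that is exactly $\eta \mapsto \eta - p(\eta, z_j)$, giving the second line. For the half-space case $\bipolar\Gamma_l = \ccone\{z_j,z_s,-z_j\}$ (a half-plane whose boundary is the line $\R z_j$): its polar is the ray $z_j^\ortho[0,\infty)$ — here I must use the normalisation built into the definition of $z_j^\ortho$, namely $\sup_{z\in\Gamma}\iprod{z}{z_j^\ortho}\le 0$, which fixes the correct sign so that the polar ray points along $z_j^\ortho$ and not $-z_j^\ortho$ — so the projection is $p(\eta, z_j^\ortho)$, giving the third line. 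For the generic two-director case $\bipolar\Gamma_l = \ccone\{z_j,z_s\}$ with $z_j,z_s$ linearly independent: its polar $K$ is the cone $\ccone\{z_j^\ortho, z_s^\ortho\}$ (choosing the signs of $z_j^\ortho,z_s^\ortho$ so each is on the nonpositive side of the respective director, i.e. $\iprod{z_j^\ortho}{z_s}\le 0$ and $\iprod{z_s^\ortho}{z_j}\le 0$). The plane then splits into four closed sectors via the two director lines: (a) $\ccone\{z_j,z_s\}$, the polar cone $\bipolar\Gamma_l^\circ{}^\circ$ whose points project to $0$; (b) $\ccone\{z_j^\ortho,z_s^\ortho\} = K$ itself, whose points are fixed; (c) $\ccone\{z_j,z_j^\ortho\}$, which projects onto the edge ray $z_j^\ortho[0,\infty)$; and (d) $\ccone\{z_s,z_s^\ortho\}$, projecting onto the edge ray $z_s^\ortho[0,\infty)$. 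This is the standard Moreau-type decomposition for projection onto a polyhedral cone in the plane; after translating back by $\zeta_l^{k,0}$ these four alternatives are exactly \eqref{eq:tv:prox2}.

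The main obstacle is bookkeeping of signs and orientations: the vectors $z_j^\ortho$ are only defined up to the stated inequality, and one must check that the polar of $\ccone\{z_j,z_s\}$ is genuinely $\ccone\{z_j^\ortho,z_s^\ortho\}$ with those orientations, and that the four sectors listed above partition $\R^2$ and correspond to the four projection regimes. I would handle this by a short lemma on the plane: if $C=\ccone\{a,b\}$ is a proper cone with $a,b$ linearly independent, then $C^\circ=\ccone\{a^\perp,b^\perp\}$ where $a^\perp\perp a$, $b^\perp\perp b$ are oriented by $\iprod{a^\perp}{b}\le 0$, $\iprod{b^\perp}{a}\le 0$, and $\proj_{C^\circ}$ acts as identity on $C^\circ$, as zero on $C$, and as projection onto the nearer bounding ray on each of the two remaining sectors — this follows from the optimality conditions $\eta - \proj_{C^\circ}\eta \in C$ and $\iprod{\eta-\proj_{C^\circ}\eta}{\proj_{C^\circ}\eta}=0$. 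Degenerate sub-cases ($z_j^\ortho$ parallel to $z_s$, etc.) collapse into the already-treated ray or half-space cases, so no separate argument is needed. Everything else is the elementary computation of $p(\freevar,z)$, which I would not spell out.
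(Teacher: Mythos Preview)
Your proposal is correct and follows essentially the same route as the paper: separate $\prox_{\gamma G_H^k}$ into the pixelwise projections onto $\Omega_l=\zeta_l^{k,0}+\polar\Gamma_l$, invoke the five-case classification of $\bipolar\Gamma_l$ from \cref{lemma:tv:gamma}, and compute each polar and projection directly. The paper's own proof is terser---especially in the two-director case, where it simply states that $\R^2$ splits into four cones and refers to a figure---so your added detail on the Moreau decomposition and the orientation of $z_j^\ortho,z_s^\ortho$ is a genuine refinement rather than a different argument.
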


\begin{figure}[t]
    \centering
    \includegraphics[width=0.4\textwidth]{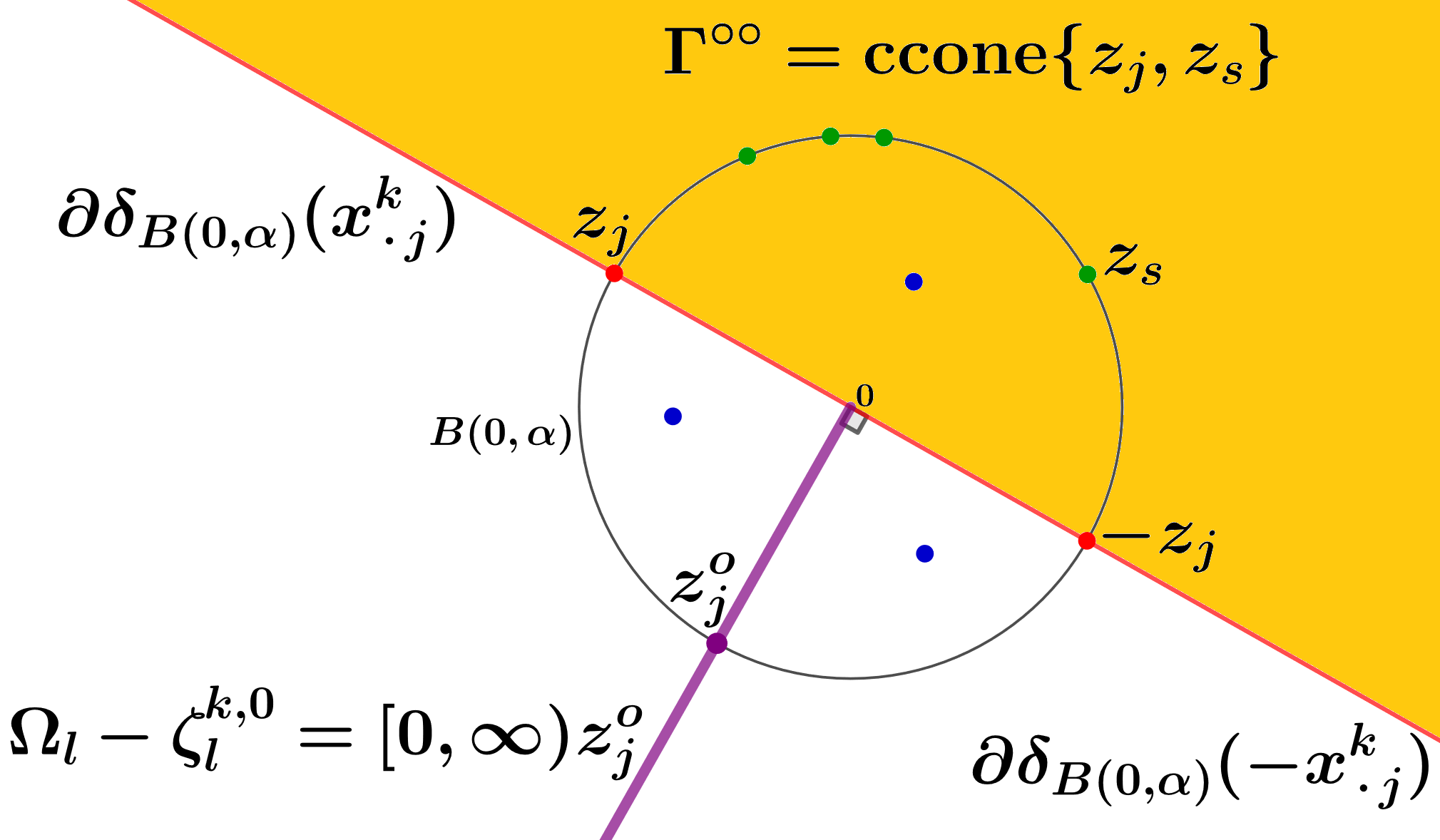}
    \hfil
    \includegraphics[width=0.37\textwidth]{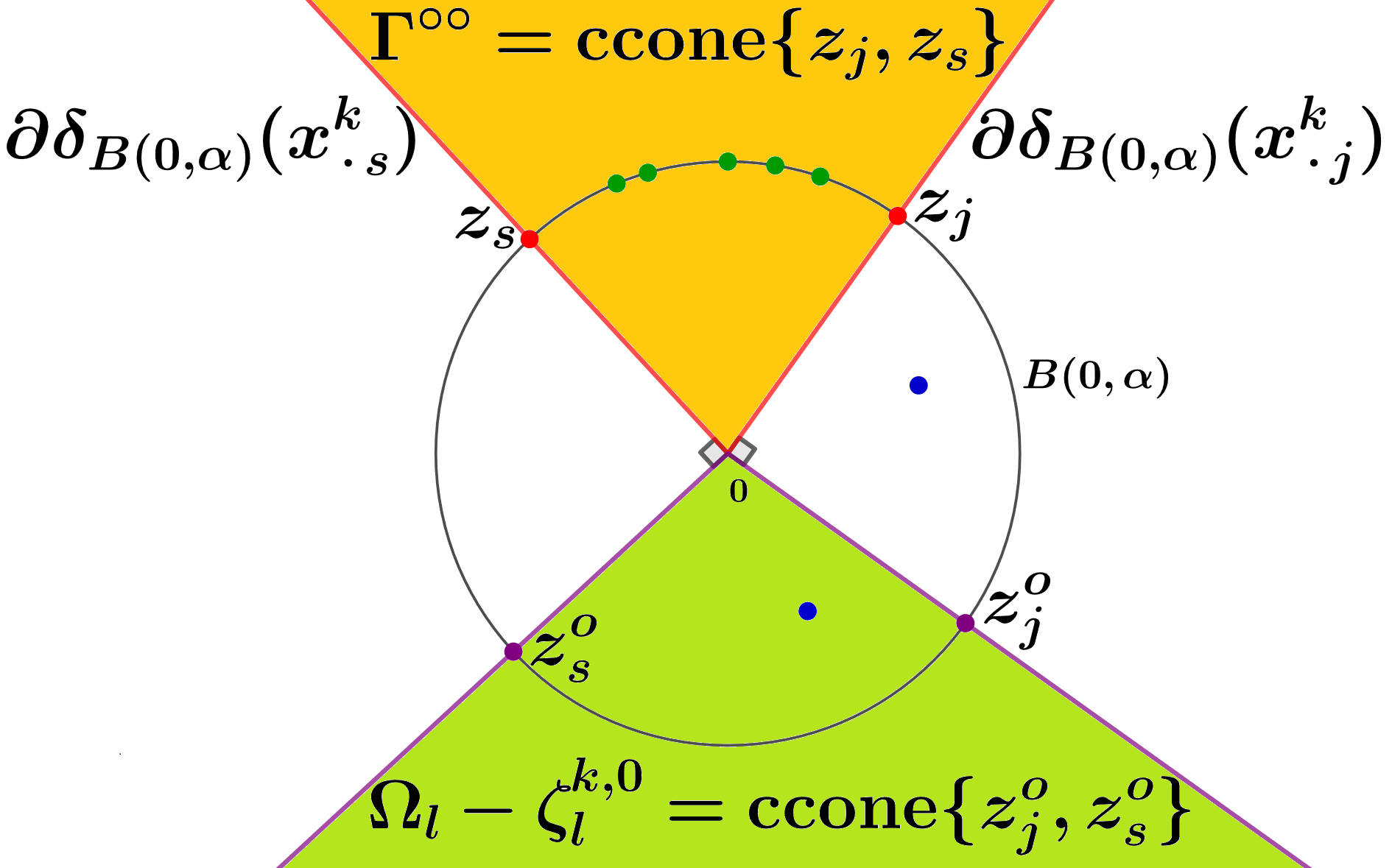}
    \caption{Illustration of the middle case of \eqref{eq:tv:prox1} (left) and \eqref{eq:tv:prox2} (right).}
    \label{fig:prox}
\end{figure}

\begin{proof}
    The proximal map of $G_H^k$ separates into individual Euclidean projections onto each $\Omega_l$.
    These are determined by $\bipolar \Gamma_l$, so we consider the cases of \cref{lemma:tv:gamma}:
    \begin{enumerate}[label=(\alph*)]
        \item
        If $\bipolar \Gamma_l = \cj{0}$, we have $\Omega_l = \rea{2}$.
        Hence the proximal map is the identity.

        \item
        If $\bipolar \Gamma _l = \rea{2}$, we have $\Omega _l = \cj{\zeta _l^{k,0}}$, so the proximal map is the constant $\zeta _l^{k,0}$.

        \item
        If $\bipolar \Gamma_l = \ccone\cj{z_{j}}$, then
        $
            \Omega _l
             = \zeta_l^{k,0} + \polar{(\bipolar \Gamma_l)}
             = \zeta_l^{k,0} + \polar{\{z_j\}}.
        $
        Thus $[\proxold{\gamma G_H^k}{\zeta}]_l = \zeta _ l -p(\zeta _l - \zeta _l^{k,0},z_{j})$.

        \item
        If $\bipolar \Gamma _l = \ccone\cj{z_{j},z_{s},-z_{j}}$, then, likewise, $\Omega _l =  \zeta _l^{k,0} + [0, \infty) z_j^\ortho$, resulting in
        $[\proxold{\gamma G_H^k}{\zeta}]_l = \zeta _ l^{k,0} -p(\zeta _l - \zeta _l^{k,0},z_{j}^\ortho)$; see \cref{fig:prox}.

        \item
        When $\bipolar \Gamma_l = \ccone \cj{z_{j},z_{s}}$, we can divide $\R^2$ into four cones, each with distinct projection operator agreeing with \cref{eq:tv:prox2}; see \cref{fig:prox}.
        \qedhere
    \end{enumerate}
\end{proof}

\subsection{The Fenchel conjugate of the (complex) data term}
\label{sec:tv:data-fenchel}

To form the Fenchel conjugate of $\phi$, we start with its derivative.
We write $\norm{\freevar}_{\R^n}$ for the Euclidean norm in $\R^n$ and
$\norm{x}_{\C^n} \defeq \sqrt{\sum_{k=1}^n \abs{x_i}^2}$ for $x=(x_1,\ldots,x_n) \in \C^n$.

\begin{lemma}
    \label{lemma:tv:phi-derivative}
    $\nabla \phi (y) = Ty -e$ for $T \defeq \sum _{s=1}^t \Re T_s^*T_s$ and $e \defeq \sum _{s=1}^t \Re T_s^* b_s$.
\end{lemma}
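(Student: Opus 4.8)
The plan is to compute the gradient of the real-valued function $\phi(y) = \frac12 \sum_{s=1}^t \norm{T_s y - b_s}_{\C^n}^2$ directly from first principles, treating $\C^n$ as a $2n$-dimensional real Hilbert space with the real inner product $\pd{u}{v}_{\R} \defeq \Re \pd{u}{v}_{\C^n}$, which is exactly the inner product for which $\norm{\freevar}_{\C^n}^2 = \pd{\freevar}{\freevar}_{\R}$. Since we only ever minimise over $y \in \R^n$, the ambient space is real, and the Fréchet derivative is taken with respect to real perturbations $h \in \R^n$.

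First I would expand, for $h \in \R^n$ and each fixed $s$,
\[
    \tfrac12\norm{T_s(y+h) - b_s}_{\C^n}^2
    = \tfrac12\norm{T_s y - b_s}_{\C^n}^2
      + \Re\pd{T_s y - b_s}{T_s h}_{\C^n}
      + \tfrac12\norm{T_s h}_{\C^n}^2,
\]
using bilinearity of $\pd{\freevar}{\freevar}_{\C^n}$ over $\R$ and that $\pd{u}{v}_{\C^n} + \pd{v}{u}_{\C^n} = 2\Re\pd{u}{v}_{\C^n}$. The quadratic term $\tfrac12\norm{T_s h}_{\C^n}^2 = O(\norm{h}_{\R^n}^2)$ is the remainder, so the derivative of the $s$-th summand at $y$ is the linear map $h \mapsto \Re\pd{T_s y - b_s}{T_s h}_{\C^n}$. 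Next I would move $T_s$ to the other side of the inner product via its Hilbert-space adjoint $T_s^* \in \linear(\C^n;\C^n)$, giving $\Re\pd{T_s^*(T_s y - b_s)}{h}_{\C^n}$; since $h \in \R^n$, only the real part of $T_s^*(T_s y - b_s)$ pairs nontrivially with $h$, i.e. this equals $\pd{\Re T_s^* T_s y - \Re T_s^* b_s}{h}_{\R^n}$. Summing over $s$ and appealing to the Riesz representation identifies $\nabla\phi(y) = \sum_{s=1}^t \bigl(\Re T_s^* T_s\, y - \Re T_s^* b_s\bigr) = Ty - e$ with $T$ and $e$ as in the statement.

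The main thing to be careful about — rather than a genuine obstacle — is the interplay between the complex structure and the fact that we differentiate only in real directions: one must consistently use the \emph{real} inner product $\Re\pd{\freevar}{\freevar}_{\C^n}$, note that $\Re T_s^* T_s$ is symmetric positive semidefinite as a real operator on $\R^n$ (so $T = \sum_s \Re T_s^* T_s$ is a legitimate symmetric operator, making $\phi$ genuinely convex and consistent with the strong-convexity discussion around \eqref{eq:intro:tv-dual}), and that restricting $T_s^* b_s$ to its real part is precisely what the pairing against $h \in \R^n$ enforces. Everything else is the routine expansion of a Hilbert-space quadratic; no continuity or measurability issues arise since $T_s$ is bounded and everything is finite-dimensional.
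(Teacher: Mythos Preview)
Your proposal is correct and follows essentially the same approach as the paper: expand $\norm{T_s(y+h)-b_s}_{\C^n}^2$, identify the linear term as $\Re\pd{T_s^*(T_sy-b_s)}{h}_{\C^n}$, use that $h\in\R^n$ to reduce this to $\pd{\Re T_s^*(T_sy-b_s)}{h}_{\R^n}$, and sum over $s$. The paper's proof is slightly terser but the ideas are identical.
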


\begin{proof}
    Due to properties of the complex inner product, for any $y, h \in \R ^n$,
    \begin{align*}
        \nr{T_s(y+h) -b_s}{\C^n}^2 - \nr{T_s y -b_s}{\C^n}^2
        &
        = 2 \Re\pd{T_sy-b_s}{T_sh}_{\mathbb{C}^n} + \nr{T_s h}{\C^n}^{2}
        \\
        &
        =
        2 \iprod{\Re T_s^*(T_sy-b_s)}{h}_{\R^n} + \nr{T_s h}{\C^n}^{2}.
    \end{align*}
    Dividing by $2$ and summing over $s=1,\ldots,t$, therefore
    \[
        \df{\phi}{y+h}-\df{\phi}{y}
        =
        \iprod{Ty-e}{h}_{\R^n} + \frac{1}{2} \sum _{s=1}^t \nr{T_s h}{\C^n}^{2}.
    \]
    By the definition of the Fréchet derivative, the claim follows.
\end{proof}

From the definition of the Fenchel conjugate and the Fermat principle, now
\begin{equation}
    \label{eq:tv:phi-conjugate-0}
    \phi ^*(z) = \pd{z}{T^{-1}(z+e)} - \phi(T^{-1}(z+e)).
\end{equation}
We next develop a more tractable form, which shows \eqref{eq:tv:dual-problem} to have equivalent form
\begin{equation}
    \label{eq:tv:equiv-dual}
    \min _{x\in \R^{D\times n}} \frac{1}{2} \nr{T^{-1/2}(\nabla _h^*x-e)}{2}^2 + \sum _i \delta_{B(0,\alpha)}(x_{\freevar i}).
\end{equation}

\begin{lemma}
    \label{lemma:tv:complex-operator-norm}
    If $A \in \linear(\C^n; \C^n)$.
    Then $\nr{Ay}{\C^n}^2 = \pd{\Re(A^*A)y}{y}_{\R^n}$ for all $y\in \R^n$.
\end{lemma}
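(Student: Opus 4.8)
The plan is a direct computation from the definition of the complex adjoint, using crucially that $y$ has real entries. First I would write
\[
    \nr{Ay}{\C^n}^2 = \pd{Ay}{Ay}_{\C^n} = \pd{y}{A^*Ay}_{\C^n} = \sum_{i=1}^n y_i\,\overline{(A^*Ay)_i},
\]
where the second equality is the defining property $\pd{Au}{v}_{\C^n} = \pd{u}{A^*v}_{\C^n}$ of the adjoint $A^* \in \linear(\C^n;\C^n)$, and the third is the componentwise form of the inner product on $\C^n$, in which $\overline{y_i}=y_i$ because $y \in \R^n$.

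Next I would separate the real and imaginary parts of this sum. The right-hand side above has real part $\sum_i y_i \Re\bigl((A^*Ay)_i\bigr)$ and imaginary part $-\sum_i y_i \Im\bigl((A^*Ay)_i\bigr)$. Since the left-hand side $\nr{Ay}{\C^n}^2$ is a nonnegative real number, the imaginary part must vanish, so $\nr{Ay}{\C^n}^2 = \sum_{i=1}^n y_i\, \Re\bigl((A^*Ay)_i\bigr)$. Finally, because $y$ is real, applying the entrywise real part $\Re(A^*A)$ of the matrix $A^*A$ to $y$ and then reading off coordinates is the same as applying $A^*A$ and then taking real parts, i.e.\ $\Re\bigl((A^*Ay)_i\bigr) = \bigl(\Re(A^*A)y\bigr)_i$; hence the sum equals $\pd{y}{\Re(A^*A)y}_{\R^n} = \pd{\Re(A^*A)y}{y}_{\R^n}$, which is the claim. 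One may additionally remark that $A^*A$ is Hermitian, so $\Re(A^*A)$ is a genuine symmetric real matrix and the right-hand side is an honest real quadratic form; this is also consistent with the use of $\Re T_s^*T_s$ in \cref{lemma:tv:phi-derivative}.

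I do not anticipate any real obstacle: the lemma is pure bookkeeping once the conventions are pinned down. The only two points requiring care are the convention for the complex inner product — which slot is conjugate-linear, fixed implicitly by the computation in the proof of \cref{lemma:tv:phi-derivative} — and the precise meaning of $\Re(A^*A)$ as an entrywise operation on the matrix rather than on its action on a vector; with these fixed, the three displays above are immediate.
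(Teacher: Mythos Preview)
Your argument is correct. The key steps --- using the adjoint to rewrite $\nr{Ay}{\C^n}^2$ as a complex inner product with $y$, observing that the imaginary part vanishes because the left-hand side is real, and then commuting $\Re$ with multiplication by the real vector $y$ --- are all sound, and your closing remark about $\Re(A^*A)$ being symmetric is a nice sanity check.

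The paper takes a slightly different, more explicit route: it writes $A=A_1+iA_2$ with $A_1,A_2\in\R^{n\times n}$, expands $A^*A=(A_1^T-iA_2^T)(A_1+iA_2)$, and obtains
\[
    \nr{Ay}{\C^n}^2=\pd{(A_1^TA_1+A_2^TA_2)y}{y}_{\R^n}+i\,\pd{(A_1^TA_2-A_2^TA_1)y}{y}_{\R^n},
\]
the imaginary term vanishing because $A_1^TA_2-A_2^TA_1$ is skew-symmetric. This gives, as a by-product, the concrete identification $\Re(A^*A)=A_1^TA_1+A_2^TA_2$. Your version avoids the real/imaginary matrix decomposition altogether and kills the imaginary part by the mere realness of the norm; it is a shade more elegant and convention-robust, while the paper's version yields an explicit formula for $\Re(A^*A)$ that could be useful downstream. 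Either way, the content is the same elementary bookkeeping.
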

\begin{proof}
    Writing $A = A_1 +iA_2$ for $A_1,A_2 \in \rea{n\times n}$, we have $A^* = A_1^T -iA_2^T$.
    Thus
    \begin{align*}
        \nr{Ay}{\C^n}^2 &
        =
        \pd{A^*Ay}{y}_{\mathbb{C}^n}
        =\pd{(A_1^T -iA_2^T)(A_1 +iA_2)y}{y}_{\mathbb{C}^n}
        \\
        & = \pd{(A_1^TA_1 + A_2^TA_2)y}{y}_{\R^n} + i\pd{(A_1^TA_2 - A_2^TA_1)y}{y}_{\R^n}.
    \end{align*}
    The last term is zero by the properties of the real inner product and transpose.
\end{proof}

\begin{lemma}
    Let $r_s = b_s - T_sT^{-1}e$ for $T$ and $e$ from \cref{lemma:tv:phi-derivative}.
    Then
    \[
        \phi^*(z)
        = \frac{1}{2} \nr{T^{-1/2}(z+e)}{\R^n}^2 - \frac{1}{2}\sum _{s=1}^t\nr{r_s}{\C^n}^2 - \nr{T^{-1/2}e}{\R^n}^2
        \quad\forall z \in \R^n.
    \]
\end{lemma}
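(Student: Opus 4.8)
The plan is to feed the Fermat-point formula \eqref{eq:tv:phi-conjugate-0} into a direct computation, reducing everything to real inner products by means of \cref{lemma:tv:complex-operator-norm} and the manipulation already used in the proof of \cref{lemma:tv:phi-derivative}. Write $w \defeq T^{-1}(z+e)$. Since $T=\sum_{s}\Re T_s^*T_s$ is real, symmetric and (by the standing invertibility assumption on the forward operators) positive definite, and $e=\sum_s\Re T_s^*b_s$ is real, the point $w$ lies in $\R^n$ and $\pd{Tw}{w}=\nr{T^{1/2}w}{\R^n}^2=\nr{T^{-1/2}(z+e)}{\R^n}^2$.

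First I would expand $\phi(w)=\frac12\sum_s\nr{T_sw-b_s}{\C^n}^2$. Squaring out gives $\sum_s\nr{T_sw}{\C^n}^2-2\Re\sum_s\pd{T_sw}{b_s}+\sum_s\nr{b_s}{\C^n}^2$. By \cref{lemma:tv:complex-operator-norm} the first sum equals $\pd{Tw}{w}$; crossing $T_s^*$ through the complex pairing and taking real parts (using that $w$ is real, exactly as in the proof of \cref{lemma:tv:phi-derivative}) turns the middle sum into $\pd{e}{w}$. Hence $\phi(w)=\frac12\bigl[\pd{Tw}{w}-2\pd{e}{w}+\sum_s\nr{b_s}{\C^n}^2\bigr]$.

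Next I would substitute this, together with $\pd{z}{w}=\pd{Tw-e}{w}=\pd{Tw}{w}-\pd{e}{w}$, into \eqref{eq:tv:phi-conjugate-0}. The terms linear in $\pd{e}{w}$ cancel, leaving $\phi^*(z)=\frac12\pd{Tw}{w}-\frac12\sum_s\nr{b_s}{\C^n}^2=\frac12\nr{T^{-1/2}(z+e)}{\R^n}^2-\frac12\sum_s\nr{b_s}{\C^n}^2$. It then remains to re-express the constant through the residuals $r_s=b_s-T_sT^{-1}e$: expanding $\nr{r_s}{\C^n}^2$ and summing over $s$, the same two moves --- \cref{lemma:tv:complex-operator-norm} to obtain $\sum_s\nr{T_sT^{-1}e}{\C^n}^2=\pd{e}{T^{-1}e}=\nr{T^{-1/2}e}{\R^n}^2$, and crossing $T_s^*$ to obtain $\Re\sum_s\pd{b_s}{T_sT^{-1}e}=\pd{e}{T^{-1}e}$ --- give $\sum_s\nr{r_s}{\C^n}^2=\sum_s\nr{b_s}{\C^n}^2-\nr{T^{-1/2}e}{\R^n}^2$. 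Inserting this yields the stated identity (and, since $\nr{-v}{}=\nr{v}{}$, dropping the additive constant then also produces the equivalent form \eqref{eq:tv:equiv-dual} of the dual problem).

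I do not expect a genuine obstacle: the argument is bookkeeping built around two applications each of \cref{lemma:tv:complex-operator-norm} and the $\Re T_s^*$ identity. The only points deserving care are tracking which pairings are the complex $\C^n$-product and which the real $\R^n$-product, using that $w=T^{-1}(z+e)$ and $T^{-1}e$ are real so that the real part of a pairing of the form $\pd{T_sv}{v'}$ with $v'$ real collapses to a real pairing with $e$, and checking the numerical coefficients --- in particular the coefficient of $\nr{T^{-1/2}e}{\R^n}^2$ in the final constant.
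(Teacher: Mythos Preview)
Your approach is correct and uses the same two ingredients as the paper --- \cref{lemma:tv:complex-operator-norm} and the $\Re T_s^*$ identity --- but organises them differently. The paper introduces $r_s$ at the very start via the substitution $T_sT^{-1}(z+e)-b_s = T_sT^{-1}z - r_s$, expands $\nr{T_sT^{-1}z - r_s}{\C^n}^2$, and kills the cross term in one stroke using the observation $\Re\sum_s T_s^*r_s = 0$; this bypasses your intermediate detour through $\sum_s\nr{b_s}{\C^n}^2$ and the separate expansion of $\sum_s\nr{r_s}{\C^n}^2$. Both routes are equally elementary and amount to the same bookkeeping.

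One point you flag (``checking the numerical coefficients'') but do not actually resolve: if you carry out your final substitution, inserting $\sum_s\nr{b_s}{\C^n}^2 = \sum_s\nr{r_s}{\C^n}^2 + \nr{T^{-1/2}e}{\R^n}^2$ into $\phi^*(z)=\tfrac12\nr{T^{-1/2}(z+e)}{\R^n}^2-\tfrac12\sum_s\nr{b_s}{\C^n}^2$ gives the constant $-\tfrac12\nr{T^{-1/2}e}{\R^n}^2$, not $-\nr{T^{-1/2}e}{\R^n}^2$ as printed in the lemma. The paper's own computation, if completed (it stops at ``the claim readily follows'' after obtaining $\phi(T^{-1}(z+e))=\tfrac12\pd{z}{T^{-1}z}+\tfrac12\sum_s\nr{r_s}{\C^n}^2$), produces the same $-\tfrac12$; a quick sanity check with $t=1$, $T_1=I$ confirms this. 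So the discrepancy is a typo in the statement, not a flaw in your argument. Since the term is an additive constant it has no effect on the equivalent dual problem \eqref{eq:tv:equiv-dual} or anything downstream.
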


\begin{proof}
    \Cref{lemma:tv:complex-operator-norm} and the properties of the complex inner product yield
    \begin{equation}
        \label{eq:tv:conjugate:0}
        \begin{aligned}[t]
        \nr{T_s(T^{-1}(z+e))-b_s}{\C^n}^2
        &
        = \nr{T_sT^{-1}z - r_s}{\C^n}^2
        \\
        &
        = \nr{T_sT^{-1}z}{\C^n}^2 - \pd{T_s T^{-1}z}{r_s}_{\mathbb{C}^n} - \pd{r_s}{T_s T^{-1}z}_{\mathbb{C}^n} +\nr{r_s}{\C^n}^2
        \\
        &
        = \iprod{\Re(T_s^*T_s) T^{-1}z}{T^{-1}z}_{\R^n}
        - 2 \Re \pd{T^{-1}z}{T_s^* r_s}_{\mathbb{C}^n}
        + \nr{r_s}{\C^n}^2.
        \end{aligned}
    \end{equation}
    We have $\sum_{s=1}^t T_s^*r_s=\sum_{s=1}^t T_s^* b_s - \sum_{s=1}^t T_s^*T_s T^{-1} e$, hence $\Re \sum_{s=1}^t T_s^*r_s= 0$.
    Since $T^{-1}z \in \R^n$, it follows that $\sum _{s=1}^t\Re \pd{T^{-1}z}{T_s^* r_s}_{\mathbb{C}^n} = 0$.
    Dividing \eqref{eq:tv:conjugate:0} by $2$ and summing over $s=1,\ldots,t$, therefore
    \[
        \phi(T^{-1}(z+e))
        = \frac{1}{2}\pd{z}{T^{-1}z}_{\R^n} + \frac{1}{2}\sum _{s=1}^t\nr{r_s}{\C^n}^2.
    \]
    Using this expression in \eqref{eq:tv:phi-conjugate-0}, the claim readily follows.
\end{proof}

We can now finally suggest one way to form the coarse function $F_H$:

\begin{example}
    \label{ex:coarse:FH}
    Form a coarse discrete gradient operator $\grad_H \in \linear(\R^N; X_H)$ and set
    \[
        F_H(\zeta ) \defeq \frac{1}{2} \nr{T_H^{-1/2}(\grad _H ^*\zeta - b_H)}{\R^N}^2
        \quad\text{for}\quad
        T_H \defeq I_h^H T
        \quad\text{and}\quad
        b_H \defeq I_h^H b.
    \]
\end{example}

\section{Numerical experience}
\label{sec:numerical}

We now report our numerical experience with denoising and MRI.
Both problems have the primal form \eqref{eq:tv:primal-problem}.
We work with the equivalent dual problem \eqref{eq:tv:dual-problem}.
Our Julia implementation is available on Zenodo \cite{multigrid-codes-zenodo}.

\subsection{Denoising}
\label{sec:numerical:denoising}

\makeatletter
\def\hlinewd#1{%
\noalign{\ifnum0=`}\fi\hrule \@height #1
\futurelet\reserved@a\@xhline}
\makeatother

\begin{table}[t]
    \caption{Time (seconds) to reach relative error $\relerr_1$ and $\relerr_2$ for both experiments.}
    \label{tab:error-time}
    \centering
    \begin{tabular}{l@{\quad}|@{\quad}lll@{\quad}|@{\quad}lll}
        \hlinewd{1pt}
        Experiment & $\relerr _1$ & FB & FBMG & $\relerr _2$ & FB & FBMG \\
        \hline
        Denoising & 0.01 & 0.030498 & 0.007404 & 0.001 & 0.231761 & 0.102651 \\
        MRI & 0.01  & 0.069281 & 0.005343 & 0.001  & 0.142469  & 0.046784 \\
        \hlinewd{1pt}
    \end{tabular}
\end{table}

For denoising we use one full sample, i.e., solve \eqref{eq:tv:primal-problem} with $t=1$ and $T_s=I$.
The dual problem \eqref{eq:tv:equiv-dual} is then $\min_{x\in \R^{D\times n}} \nr{\nabla _h^* x - e}{\R^n}^2 +\sum _{i} \delta _{B(0,\alpha)}(x_{\freevar ,i})$.
We use the the \emph{Blue Marble} public domain test image with resolution $3002\times 3000$. We add pixelwise Gaussian noise of standard deviation $\sigma = 0.4$.
The Lipschitz constant $L=L_H=8$ \cite{chambolle2004algorithm}.
We take $\alpha = 0.85$ and $\tau = 0.95/L$ and $\tau_H = 1.95/L$.
In FBMG, we perform $m=6$ coarse steps, based on trial and error, before the first 110 fine iterations only.
For line search, we try $\theta_k = \bar\theta_k \defeq \omega_k\pd{T^{-1}(e-\grad ^* x^{k})}{\grad^* d}/ \nr{T^{-1/2}\grad ^*d}{2}^2 \ge 0$ for the scaling factor $\omega_k = 2/5$, and otherwise fail with $\theta_k=0$.\footnote{When $\omega_k <2$, $\bar\theta_k$ is a scaled-down exact solution to \eqref{eq:fb:descent} for $F=\phi^*$ and $G=0$. Small $\omega_k$ attempts to ensure $x^k+\bar\theta_k d \in B(0, \alpha)^n$. By convexity, this check guarantees descent.}
We illustrate the data and reconstructions in \cref{fig:denoising:compare:FB:FBMG}, and the performance in \cref{fig:denoising:graphs,tab:error-time}, where,
for $x^*$ computed by 100000 iterations of FBMGs, the \emph{relative error}
\begin{equation}
    \label{eq:relerror}
    \relerr=\relerr^k \defeq (v(x^k)-v(x^*))/(v(x^0)-v(x^*))
    \quad\text{with}\quad
    v(x) \defeq \phi^*(-\nabla _h^* x) + G(x).
\end{equation}
The \emph{iteration comparison number} in \cref{fig:denoising:graphs} scales coarse iterations by the ratio of the number of coarse to fine pixels.

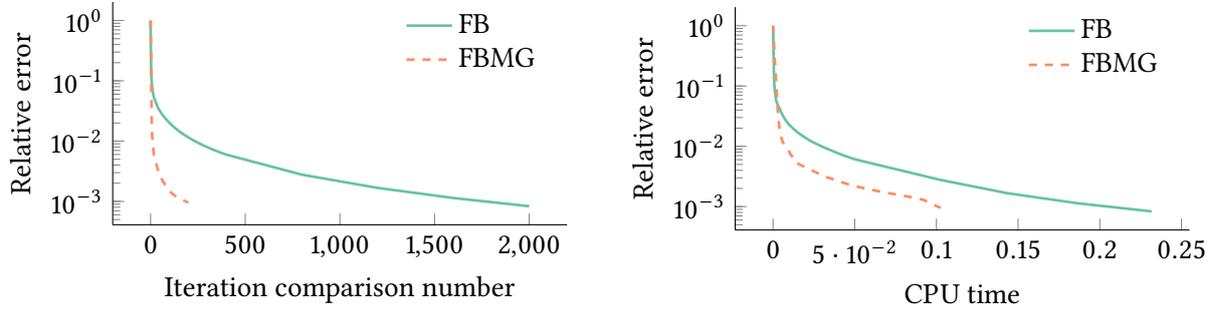
\begin{figure}[t]
    \centering
    \begin{subfigure}{0.48\columnwidth}
    \begin{tikzpicture}
        \begin{axis}[%
            width = \linewidth,
            height = 0.6\linewidth,
            axis x line*=bottom,
            axis y line*=left,
            xlabel={Iteration comparison number},
            ylabel={Relative error},
            ymode=log,
            xmode=normal,
            legend pos = north east,
            ]

            \addplot [fb] table[x=iter,y=relative]{fb_simple_final_rel.txt};
            \addlegendentry{FB}

            \addplot [projgrad] table[x=iter,y=relative]{fbmg_simple_final_rel.txt};
            \addlegendentry{FBMG}
        \end{axis}
    \end{tikzpicture}
    \end{subfigure}
    \hfill
    \begin{subfigure}{0.48\columnwidth}
    \begin{tikzpicture}
        \begin{axis}[%
            width = \linewidth,
            height = 0.6\linewidth,
            axis x line*=bottom,
            axis y line*=left,
            xlabel={CPU time},
            ylabel={Relative error},
            ymode=log,
            xmode=normal,
            legend pos = north east,
            ]

            \addplot [fb] table[x=cputime,y=relative]{fb_simple_final_rel.txt};
            \addlegendentry{FB}

            \addplot [projgrad] table[x=cputime,y=relative]{fbmg_simple_final_rel.txt};
            \addlegendentry{FBMG}
        \end{axis}
    \end{tikzpicture}
    \end{subfigure}
    \caption{Relative error \eqref{eq:relerror} versus iteration count and CPU time for denoising.}
    \label{fig:denoising:graphs}
\end{figure}

\begin{figure}[t]
    \centering
    \begin{subfigure}{.2\textwidth}
        \centering
        \includegraphics[width=0.95\linewidth]{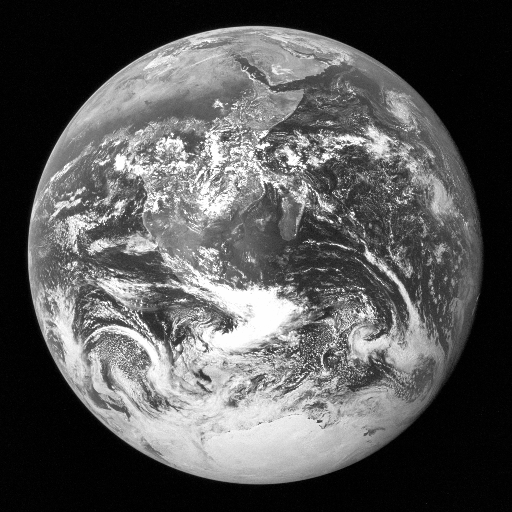}
        \caption{Original}
        \label{fig:original:image}
    \end{subfigure}%
    \begin{subfigure}{.2\textwidth}
        \centering
        \includegraphics[width=0.95\linewidth]{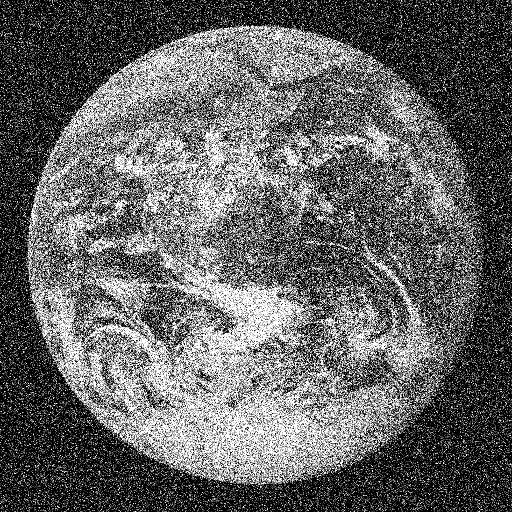}
        \caption{Noisy}
        \label{fig:noisy:image}
    \end{subfigure}
    \begin{subfigure}{.2\textwidth}
        \centering
        \includegraphics[width=0.95\linewidth]{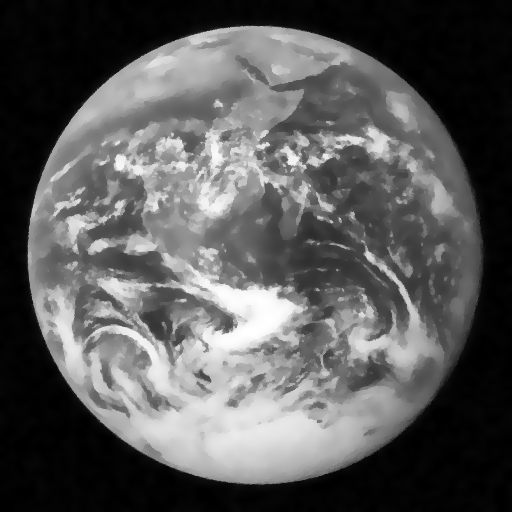}
        \caption{FB}
        \label{fig:fb:solution:image}
    \end{subfigure}%
    \begin{subfigure}{.2\textwidth}
        \centering
        \includegraphics[width=0.95\linewidth]{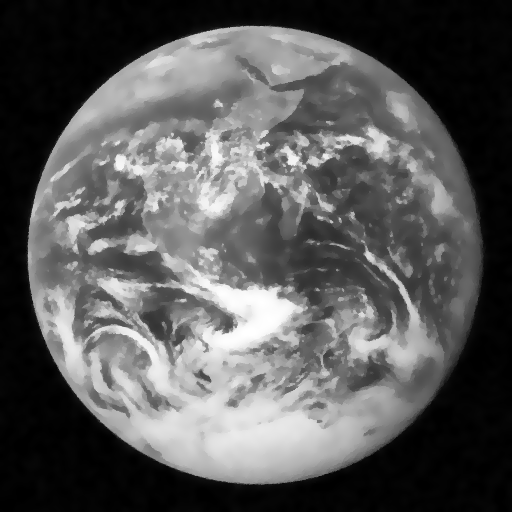}
        \caption{FBMG}
        \label{fig:fbmg:solution:image}
    \end{subfigure}%
    \caption{Denoising data and results at relative error $\relerr= 0.001$.
    }
    \label{fig:denoising:compare:FB:FBMG}
\end{figure}

\subsection{Magnetic resonance imaging}
\label{sec:numerical:mri}

We take $T_s \defeq S_s\mathcal{F}$, where $\mathcal{F}$ is the discrete Fourier transform and $S_s$ is a frequency subsampling operator. Then $T$ of \cref{lemma:tv:phi-derivative} becomes
$
    T = \Re \mathcal{F}^* S \mathcal{F} = \mathcal{F}^* \Sym S \mathcal{F}
$
for $S \defeq \sum _{s=1}^t S_s^* S_s$ and $\Sym S$ its symmetrisation over positive and negative frequencies in both axes.
Thus $T^{-1}$, required for the dual problem \eqref{eq:tv:equiv-dual}, exists and is easily calculated when each frequency is sampled by some $S_s$.
With $t=21$, we form each subsampling mask $S_1,\ldots,S_t$ by random sampling 150 lines in the Fourier space from a uniform distribution of such subsets of lines.
We use the MRI phantom of \cite{belzunce2018high} with resolution $583\times 493$, and add complex Gaussian noise with standard deviation $\sigma = 50$.
We take $\alpha = 1.15$, and the step length parameters and line search as for denoising with $L = 8\norm{T^{-1}}$, $L_H = 8 \norm{T_H^{-1}}$.
We perform $m=6$ coarse steps before the first 500 fine iterations only.
We illustrate the data, reconstructions, and performance in \cref{fig:mri:compare:FB:FBMG:mri,fig:mri:graphs,tab:error-time}.

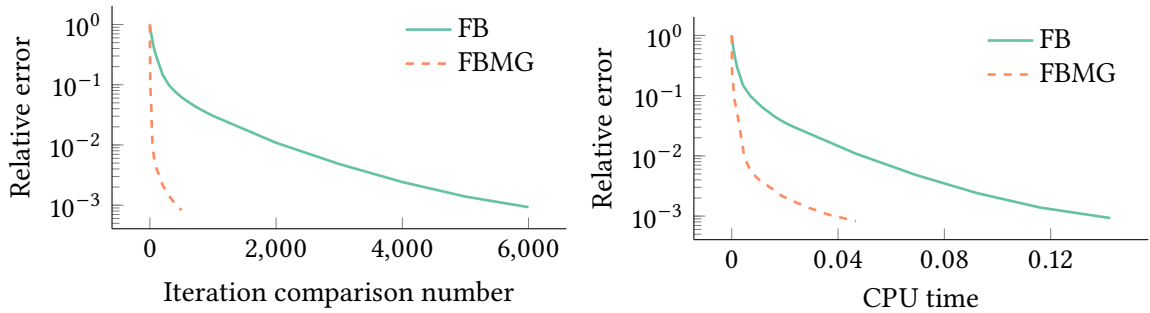
\begin{figure}[t]
    \centering
    \begin{subfigure}{0.48\columnwidth}
    \begin{tikzpicture}
        \begin{axis}[%
            width = \linewidth,
            height = 0.6\linewidth,
            axis x line*=bottom,
            axis y line*=left,
            xlabel={Iteration comparison number},
            ylabel={Relative error},
            ymode=log,
            xmode=normal,
            legend pos = north east,
            ]
            \addplot [fb] table[x=iter,y=relative]{fb_mri_final_noise50_21m.txt};
            \addlegendentry{FB}

            \addplot [projgrad] table[x=iter,y=relative]{fbmg_mri_final_noise50_21m.txt};
            \addlegendentry{FBMG}
        \end{axis}
    \end{tikzpicture}
    \end{subfigure}
    \begin{subfigure}{0.48\columnwidth}
    \begin{tikzpicture}
        \begin{axis}[%
            width = \linewidth,
            height = 0.6\linewidth,
            axis x line*=bottom,
            axis y line*=left,
            xlabel={CPU time},
            ylabel={Relative error},
            ymode=log,
            xmode=normal,
            legend pos = north east,
            xtick = {0.0, 0.04, 0.08, 0.12},
            xticklabel style={
                /pgf/number format/precision=3,
                /pgf/number format/fixed,
            }
            ]

            \addplot [fb] table[x=cputime,y=relative]{fb_mri_final_noise50_21m.txt};
            \addlegendentry{FB}

            \addplot [projgrad] table[x=cputime,y=relative]{fbmg_mri_final_noise50_21m.txt};
            \addlegendentry{FBMG}
        \end{axis}
    \end{tikzpicture}
    \end{subfigure}
    \caption{Relative error \eqref{eq:relerror} versus both iteration count and CPU time for MRI.}
    \label{fig:mri:graphs}
\end{figure}

\begin{figure}[t]
    \centering
    \begin{subfigure}{.195\textwidth}
        \centering
        \includegraphics[width=0.95\linewidth]{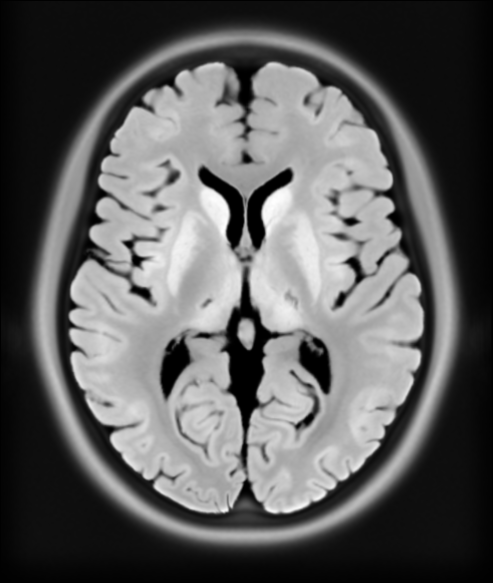}
        \caption{Original}
        \label{fig:original:image:mri}
    \end{subfigure}%
    \begin{subfigure}{.195\textwidth}
        \centering
        \includegraphics[width=0.95\linewidth]{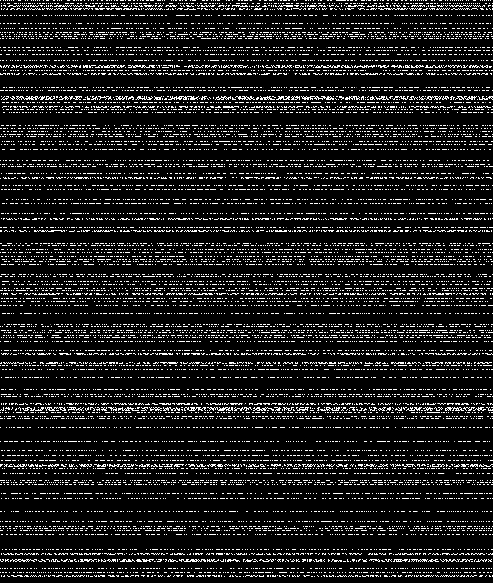}
        \caption{Sample}
        \label{fig:noisy:line:sample:mri}
    \end{subfigure}%
    \begin{subfigure}{.195\textwidth}
        \centering
        \includegraphics[width=0.95\linewidth]{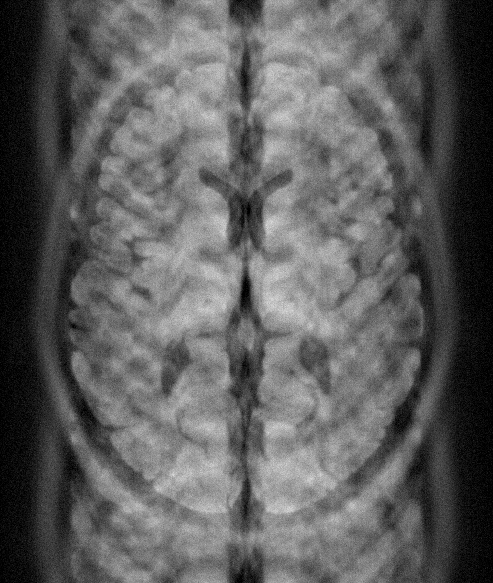}
        \caption{Backproj.}
        \label{fig:noisy:transform:sample:mri}
    \end{subfigure}%
    \begin{subfigure}{.195\textwidth}
        \centering
        \includegraphics[width=0.95\linewidth]{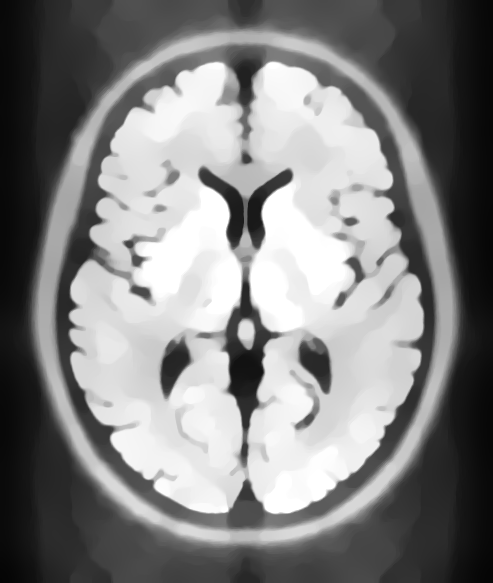}
        \caption{FB}
        \label{fig:fb:solution:image:mri}
    \end{subfigure}%
    \begin{subfigure}{.195\textwidth}
        \centering
        \includegraphics[width=0.95\linewidth]{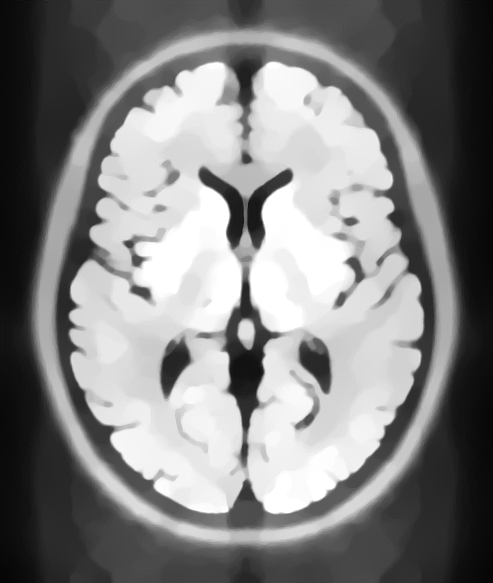}
        \caption{FBMG}
        \label{fig:fbmg:solution:image:mri}
    \end{subfigure}%
    \caption{MRI data and results at relative error $\relerr=0.01$.
    (\subref{fig:noisy:transform:sample:mri}) is the backprojection of the Fourier line sample (\subref{fig:noisy:line:sample:mri}).
    There are altogether $t=100$ such samples.
    }
    \label{fig:mri:compare:FB:FBMG:mri}
\end{figure}

\subsection{Conclusions}

\Cref{fig:denoising:graphs,fig:mri:graphs,tab:error-time} indicate that while the performance improvements in denoising are noticeable, they are \emph{very significant} for the much more expensive MRI problem.
This can be expected, as the fine grid Fourier transform is an expensive operation.
The situation is comparable to \cite{parpas2017multilevel}, who do deblurring directly with the primal problem.
This requires proximal map of total variation to be solved numerically (as a denoising problem) on each fine-grid step, while in the coarse grid they avoid this by using a smooth problem and gradient steps.
For multigrid optimisation methods to be meaningful, it therefore appears that the coarse-grid problems have to be significantly cheaper than the fine-grid problems.

\bibliographystyle{jnsao}
\input{main.xbbl}

\end{document}